\documentclass[12pt]{article}
\usepackage{amsmath,fullpage,amsthm,amssymb,xcolor,graphicx}
\newtheorem{prop}{Proposition}[section]
\newtheorem{theorem}{Theorem}[section]
\newtheorem{corollary}{Corollary}[section]

\newtheorem{definition}{Definition}[section]
\newtheorem{remark}{Remark}[section]
\newtheorem{example}{Example}[section]

\DeclareMathOperator{\spec}{spec}

\renewcommand{\Re}{\operatorname{Re}}
\renewcommand{\det}{\operatorname{det}}
\title{On the spectrum of complex unit gain graphs}
\author{Aniruddha Samanta \thanks{Department of Mathematics, Indian Institute of Technology Kharagpur, Kharagpur 721302, India. Email: aniruddha.sam@gmail.com}\  \and M. Rajesh Kannan\thanks{Corresponing author}~\thanks{Department of Mathematics, Indian Institute of Technology Hyderabad, Hyderabad 502285, India. Email: rajeshkannan@math.iith.ac.in, rajeshkannan1.m@gmail.com }}

\date{\today}
\begin{document}
	\maketitle
	\baselineskip=0.25in

\begin{abstract}
	
	A complex unit gain graph ($\mathbb{T}$-gain graph) $ \Phi=(G, \varphi) $ is a graph where the gain function $ \varphi $ assigns a unit complex number to each orientation of an edge of $ G $, and its inverse is assigned to the opposite orientation. The adjacency matrix $ A(\Phi) $ of $ \Phi $ is defined canonically. In this article, first  we study  cospectrality of the adjacency matrices of various $ \mathbb{T} $-gain graphs defined on the same underlying graph. Let $ \rho(\Phi) $ and $ \lambda_1(\Phi) $ be the spectral radius and largest eigenvalue of $ A(\Phi) $, respectively. A graph $ X $ which contains both directed and undirected edges is known as a mixed graph. Adjacency matrices of mixed graphs are  particular cases of adjacency matrices of $\mathbb{T}$-gain graphs. For any mixed graph $X$, the following holds:  $ \lambda_1(X)\leq \rho(X)\leq 3\lambda_1(X) $. We construct examples to show this inequality need not be true for arbitrary $\mathbb{T}$-gain graphs. We construct classes of gain graphs for which the above inequality holds.  We consider  new classes of Hermitian matrices $ H_k(X) $, $k = 1,2,\dots$, associated with a mixed graph $ X $. Finally we esatablish that $ \rho(H_k(X))\leq \Delta $, where $ \Delta $ is the largest vertex degree of $ X $, and characterize the structure of $ X $ for which the equality holds. As a consequence, two known results about the spectral radius of adjacency matrices of mixed graphs are deduced.
\end{abstract}

{\bf Mathematics Subject Classification(2010):} 05C50, 05C22, 15B57.

\textbf{Keywords.} Gain graph, Cospectral graphs, Largest eigenvalue, Spectral radius, Largest vertex degree.

  \section{Introduction}

The study of matrices and their eigenvalues associated with graphs has evolved over the past few decades. There has been a growing interest among researchers to study the adjacency, Laplacian, and normalized Laplacian matrices associated with undirected graphs. If the graph is undirected, then all the above-mentioned matrices are symmetric.     On the other hand, in the case of the directed graphs and the mixed graphs (graphs containing both directed and undirected edges), if we associate the adjacency matrices canonically, then we may get non-symmetric matrices, which are hard to work with. To overcome this drawback, recently researchers came up with various interesting classes of adjacency matrices associated with graphs viz., weighted adjacency matrices, Hermitian adjacency matrices of graphs, and gain adjacency matrices. All these matrices are Hermitian and the entries are complex numbers.  All these matrices associated with the given directed/mixed graphs reveal several properties of the underlying graph.

The main objective of this manuscript is to study the properties of the graphs with the aid of the spectrum of the gain adjacency matrices associated with them. Some of the major contributions of this manuscript include the following:  invariance of the spectrum of various gain adjacency matrices associated with a graph; establishing bounds for the spectral radius of gain adjacency matrix in terms of the largest eigenvalue;  deriving bound for the spectral radius of gain adjacency matrices in terms of the maximum vertex degree of the underlying simple graph, and characterizing the sharpness of this bound (for some cases).  The bound established for the spectral radius of gain adjacency matrices in terms of the largest eigenvalues holds for all gain adjacency matrices with nonnegative real part (also for the gain adjacency matrices switching equivalent to a gain adjacency matrix with nonnegative real part). This leads to the question that which gain graphs are switching equivalent to a gain graph with nonnegative real parts.  It seems this is a hard problem to solve. Nevertheless, we provide a large collection of graphs for which this property holds. Note that, some of these results are known for the Hermitian adjacency matrices of graphs. But the proofs presented here are different from that of the Hermitian adjacency matrices of graphs. In fact, in some cases, our proofs simplify the known proofs and extend for the more general case viz., for the gain adjacency matrices.

For a given group $\mathfrak{G}$, a $\mathfrak{G}$-gain graph is a graph $G$ with each orientation of an edge of $G$ is assigned  an element $g \in \mathfrak{G}$ (called a gain of the oriented edge)  and whose  inverse $g^{-1}$ is assigned to the opposite orientation of the edge. For more details about the notion of $\mathfrak{G}$-gain graphs, we refer to  \cite{Zaslav-gain-balance, Zas4}. Let $\mathbb{T}=\{z\in\mathbb{C}:|z|=1\} $ be the multiplicative group of unit complex numbers. In \cite{reff1}, the notion of $\mathbb{T}$-gain graphs has been studied. If $G$ is a graph with the vertex set $ V(G)=\{v_{1}, v_{2}, \dots, v_{n} \} $ and each orientation of its edges having some gain from $\mathbb{T}$, then the associated  $\mathbb{T}$-gain adjacency matrix is an  $n \times n$ matrix defined  as follows: $(s,t)th$ entry of the matrix is the gain of the edge starts from the vertex $v_{s}$ and ends at the vertex $v_{t}$, and is zero if there is no edge between the vertices $v_{s}$ and $v_{t}$.

One of the motivations to consider the notion of $\mathbb{T}$-gain adjacency matrices is to unify the various known notions of adjacency matrices of simple graphs, digraphs and mixed graphs. The notion of \emph{signed graph} is a particular case of $\mathbb{T}$-gain graph, where the gains are taken from $\{\pm 1\}$. In \cite{Bap,Kat}, the authors considered complex weighted graphs where the weights are from $\{\pm1,\pm i\}$, which is again a particular case of gain graphs. Hermitian adjacency matrix of a mixed graph, which was introduced in \cite{Bojan, Lie}, is a particular case with gains $\{1, \pm i\}$. Even though $\{1, \pm i\}$ is not a group, we need only inverse closed sets here.  Recently, in \cite{mohar-new} the notion of Hermitian adjacency matrices of second kind for mixed graphs is introduced.

Particular cases of the notion of adjacency matrices of  $\mathbb{T}$-gain graphs were considered  with different gains  in the literature \cite{Bap,Kat}.  In \cite{Ger}, the authors studied some properties of the characteristic polynomial of the $\mathbb{T}$-gain graphs. For some interesting spectral properties of $\mathbb{T}$-gain graphs, we refer to \cite{joac-gain,  reff1, Reff2016, Ger,  Zaslav, Zas2}.

A \emph{directed graph (or digraph)} $ X $ is an ordered pair $ (V(X), E(X)) $, where $ V(X)=\{ v_{1}, v_{2}, \dots,v_{n}\} $ is the vertex set and $ E(X) $ is the directed edge set. A directed edge from the vertex $ v_{s} $ to the vertex $ v_{t} $ is denoted by $ \overrightarrow{e_{st}} $. If $ \overrightarrow{e_{st}} \in E(X)$ and $  \overrightarrow{e_{ts}}\in E(X)$, then the pair $ \{v_{s},v_{t}\} $ is called a \emph{digon} of $ X $. The  underlying graph of $ X $ is a simple undirected graph obtained from $ X $ by replacing a directed edge by an undirected edge and it is denoted by $ \Gamma(X) $.  The  \textit{Hermitian adjacency matrix} \cite{Bojan, Lie} of a digraph $ X $ is  denoted by $H(X)$ and is defined as follows:
$$\mbox{$ (s,t){th}$ entry of }H(X)=h_{st}=\begin{cases}
	1& \text{if  } \mbox{$\overrightarrow{e_{st}}\in E(X) $ \text{and} $\overrightarrow{e_{ts}} \in E(X)$},\\
	i& \text{if  } \mbox{$\overrightarrow{e_{st}} \in E(X)$ \text{and} $\overrightarrow{e_{ts}} \notin E(X)$},\\
	-i& \text{if  } \mbox{$\overrightarrow{e_{st}} \notin E(X)$ \text{and} $\overrightarrow{e_{ts}} \in E(X)$},\\
	0&\text{otherwise.}\end{cases}$$
The Hermitian adjacency matrix can be thought of as the adjacency matrix of a $\mathbb{T}$-gain graph where the gains are from $ \{1, \pm i\}$.

Let $\mathbb{T}_{G} $ denote the collection of all  $\mathbb{T}$-gain adjacency matrices on a graph $G$. The first main objective of this article is to study the relationship between  the spectral properties of various  $\mathbb{T}$-gain adjacency matrices associated with $G$. In section \ref{co-spec},  we show that either the spectrum or the spectral radii of the $\mathbb{T}$-gain adjacency matrices on a graph $G$ are invariant under all $\mathbb{T}$-gains if and only if  $G$ is a tree [cf, Theorem \ref{spec-equi-tree}]. Then, we identify a class of connected graphs such that for every graph $G$ in this class, the co-spectrality of adjacency matrices in $\mathbb{T}_G$ is determined by the gains of the fundamental cycles.

The second objective of this article is to introduce a class of connected graphs, called $ \mathcal{F} $-graphs,  such that for each $G$ in this collection, the matrices in $\mathbb{T}_G$ have nonnegative real part  up to the diagonal unitary similarity.
We extend some of the bounds  for the spectral radius of the gain adjacency matrices of $ \mathcal{F} $-graphs , which are known for Hermitian adjacency matrices.

The spectral radius of the adjacency matrix $ A(G) $ of a graph $ G $ is the largest eigenvalue of $A(G)$.  If we consider directed graph or weighted graph with complex numbers as edge weights, then the spectral radius need not be an eigenvalue of the associated adjacency matrix. For the Hermitian adjacency matrix of a digraph, surprisingly, in \cite[Theorem 5.6]{Bojan}, the authors established the following bounds for the  spectral radius, denoted by $\rho(H(X))$, of the Hermitian adjacency matrix $ H(X) $  in terms of its largest eigenvalue $\lambda_{1}(H(X))$.

\begin{theorem}[{\cite[Theorem 5.6]{Bojan}}]\label{Theo.6}
	For every digraph $X$, $ \lambda_{1}(H(X))\leq \rho(H(X))\leq3\lambda_{1}(H(X))$.
\end{theorem}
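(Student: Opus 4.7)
The lower bound $\lambda_1(H(X))\leq\rho(H(X))$ is immediate: since $H(X)$ is Hermitian its spectrum is real and $\rho(H(X))=\max\{\lambda_1(H(X)),-\lambda_n(H(X))\}$, and because $\mathrm{tr}(H(X))=0$, either $H(X)=0$ (and both sides vanish) or $\lambda_1(H(X))>0$, in which case $\rho(H(X))\geq\lambda_1(H(X))$.

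For the upper bound I would first reduce to an inequality between $H(X)$ and the adjacency matrix of the underlying graph. Set $B:=A(\Gamma(X))$ and observe that $B=|H(X)|$ entry-wise. By the triangle inequality, for every unit vector $v$,
$|\langle v,H(X)v\rangle|\leq\langle|v|,B\,|v|\rangle\leq\lambda_1(B)$,
so $\rho(H(X))\leq\lambda_1(B)$. It therefore suffices to prove the sharper statement $\lambda_1(B)\leq 3\lambda_1(H(X))$. Let $w\geq 0$ be a unit Perron eigenvector of $B$. The plan is to construct three unit test vectors $w^{(k)}$, $k\in\{0,1,2\}$, of the form $w^{(k)}_s=\zeta^{(k)}_s w_s$ with $|\zeta^{(k)}_s|=1$, and show that summing their Rayleigh quotients for $H(X)$ recovers $\langle w,Bw\rangle=\lambda_1(B)$. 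A direct expansion gives that each digon $\{s,t\}$ contributes $2w_sw_t\,\mathrm{Re}(\overline{\zeta^{(k)}_s}\zeta^{(k)}_t)$ and each single directed edge $s\to t$ contributes $-2w_sw_t\,\mathrm{Im}(\overline{\zeta^{(k)}_s}\zeta^{(k)}_t)$ to $\langle w^{(k)},H(X)w^{(k)}\rangle$, while a generic edge of $B$ contributes $2w_sw_t$ to $\langle w,Bw\rangle$. If the three phase families are arranged so that on every edge of $\Gamma(X)$ the sum of the three per-family contributions is at least $2w_sw_t$, then $\sum_k\langle w^{(k)},H(X)w^{(k)}\rangle\geq\langle w,Bw\rangle$; combining pigeonhole with the Rayleigh principle would then force $\lambda_1(H(X))\geq\lambda_1(B)/3$.

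The main obstacle is the explicit combinatorial construction of the three phase families. My approach is to fix a spanning tree $T$ of $\Gamma(X)$ rooted at some vertex and propagate fourth-root-of-unity phases along $T$ to produce a base assignment $\zeta^{(0)}$ under which every tree edge of $X$ is realized as a positive real entry in the gauged matrix; this is possible because the entries of $H(X)$ lie in $\{1,\pm i\}$, a subgroup of the fourth roots of unity. The other two families $\zeta^{(1)}$ and $\zeta^{(2)}$ are obtained from $\zeta^{(0)}$ by multiplying the phases on a carefully chosen subset of vertices by $i$ and $-i$ respectively, with the subset designed to rotate the non-tree digons and non-tree single directed edges into a favorable orientation without disturbing the already-aligned tree edges. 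Verifying that every edge is covered with total contribution at least $2w_sw_t$ proceeds by a case analysis on the edge type (tree versus non-tree, digon versus single directed) and on the gain of its fundamental cycle with respect to $T$. The factor $3$ in the bound arises precisely from this three-fold partition of the edges of $X$ according to which of the three test vectors makes each edge contribute positively and fully.
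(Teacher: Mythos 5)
Your lower bound and your reduction $\rho(H(X))\leq\lambda_1(A(\Gamma(X)))$ are both fine (the latter is Theorem \ref{Theo.8} here). The gap is in the upper bound, and it is twofold. First, you replace the statement to be proved by the strictly stronger claim $\lambda_1(A(\Gamma(X)))\leq 3\lambda_1(H(X))$, which you never establish; nothing in the proposal shows this sharper inequality is even true, and it is not needed. Second, the construction you sketch for the three phase families provably cannot work. Write $\mu^{(k)}_{st}=h_{st}\overline{\zeta^{(k)}_s}\zeta^{(k)}_t\in\{1,i,-1,-i\}$, so the edge $\{s,t\}$ contributes $2w_sw_t\,\Re(\mu^{(k)}_{st})$ to $\langle w^{(k)},H(X)w^{(k)}\rangle$ and your requirement is $\sum_{k=0}^{2}\Re(\mu^{(k)}_{st})\geq 1$. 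If $\zeta^{(1)}$ and $\zeta^{(2)}$ are obtained from $\zeta^{(0)}$ by multiplying the phases on a subset $S$ of vertices by $i$ and $-i$ respectively, then for an edge with both endpoints on the same side of $S$ one gets $\mu^{(1)}=\mu^{(2)}=\mu^{(0)}$ and the sum is $3\Re(\mu^{(0)})$, while for an edge separated by $S$ one gets $\mu^{(1)}=i\mu^{(0)}$, $\mu^{(2)}=-i\mu^{(0)}$, whose real parts cancel, leaving $\Re(\mu^{(0)})$. In either case the requirement forces $\mu^{(0)}_{st}=1$ on \emph{every} edge, i.e.\ the gauge $\zeta^{(0)}$ already turns $H(X)$ into $A(\Gamma(X))$, which happens only when the gain graph is balanced. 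For a generic digraph (e.g.\ the orientation of $K_4$ with $H=I-vv^{*}$, $v=(1,-1,i,-i)$, where $\lambda_1(H)=1$ and $\lambda_1(A(K_4))=3$) no such gauge exists, so no choice of $S$ can satisfy your covering condition. A genuinely different combinatorial cover might exist, but you have not produced one, and the case analysis you defer is exactly where the difficulty lives.

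For comparison, the argument this paper uses (in the generalized form of Theorem \ref{Theo.5.1}, which contains the present statement as the special case of gains in $\{1,\pm i\}$) is analytic rather than combinatorial: decompose $H(X)=P+R$ with $P=\Re(H(X))\geq 0$ entrywise and $R=iL$ for a real skew-symmetric $L$. Real test vectors kill $R$, so $\lambda_1(P)\leq\lambda_1(H(X))$, and Perron--Frobenius gives $\rho(P)=\lambda_1(P)$. If $\rho(H(X))=-\lambda_n(H(X))>3\lambda_1(H(X))$ were attained at a unit vector $w$, then $|w^{*}Rw|\geq\tfrac{2}{3}\rho(H(X))$; since the numerical range of $R$ is symmetric about the origin, one finds a unit $y$ with $y^{*}Ry=|w^{*}Rw|$, whence $\lambda_1(H(X))\geq y^{*}Py+y^{*}Ry\geq\tfrac{1}{3}\rho(H(X))$, a contradiction. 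The key facts --- entrywise nonnegativity of the real part and the sign-symmetry of the skew part's numerical range --- replace your three-fold edge cover entirely, and this is the route I would recommend you take.
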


The third objective of this article is to extend the above result for $\mathbb{T}$-gain adjacency matrices. Interestingly, the above bound need not be true for arbitrary gain graphs. Infact, the difference between the spectral radius and the largest eigenvalue of gain graphs can be arbitrarily large. For any connected graph, we show that if  the associated $\mathbb{T}$-gain graph has nonnegative real part, then  the above bounds hold true [cf, Theorem \ref{Theo.5.1}].

The spectral radius of the adjacency matrix of a graph is bounded above by the maximum vertex degree of the underlying graph. In \cite{Bojan}, this bound was extended to Hermitian adjacency matrices. Also the sharpness of the bound has been characterized  \cite[Theorem 5.1] {Bojan}. Our final objective is to introduce the notion of $ k $-generalized Hermitian adjacency matrix, $ H_{k}(X) $ of a mixed graph $ X $, which extends the notion of Hermitian adjacency matrix of $X$.
For this new class of matrices, we characterize the structure of a mixed graph $ X $ for which the spectral radius $ \rho(H_{k}(X))$  equals to the largest vertex degree of $\Gamma(X)$, and particularly $ k=1 $ gives an alternative  proof of some of the known results for Hermitian adjacency matrices.

\section{Notations, definitions and known results}

Let $ G=(V(G), E(G)) $ be a simple undirected graph with the vertex set $ V(G)=\{v_{1}, v_{2},\dots, v_{n} \} $ and the edge set $ E(G) $. If the vertex $ v_{i} $ is adjacent to the vertex $ v_{j} $, then we write $ v_{i} \sim v_{j}$. The undirected edge between the vertices $ v_{i} $ and $ v_{j} $, if exists,  is denoted by  $ e_{ij} $. The \textit{adjacency matrix} of a simple graph $ G $, denoted by $ A(G) $,  is the symmetric  $ n \times n $  matrix whose  $ (i,j)th $ entry  is defined by $ a_{ij}=1 $ if $ v_{i} \sim v_{j} $, and  $ a_{ij} = 0 $ otherwise.

A digraph is said to be an \emph{oriented graph} if it has no digons. A \textit{ mixed graph } is a graph which may contain both directed and undirected edges. When we consider Hermitian adjacency matrix of a mixed graph,   the undirected edges are treated as digons. From this point of view, digraphs and mixed graphs are equivalent.

An \textit{path} in an undirected tree $ T $ between the vertices $ v_{i} $ and $ v_{j} $ is denoted by  $ v_{i}Tv_{j} $. If $S$ is a directed tree, then  $\overrightarrow{v_{i}Sv_{j}} $ denote the \textit{directed path} in  $ S $ from the vertex $ v_{i} $ to the vertex $ v_{j}$.
A \emph{rooted tree} is a tree in which one vertex has been fixed as the root. Let $T$ be a rooted tree with the vertex $v_{r}$ as the root. The vertex set of the rooted tree $T$ admits a canonical partial ordering on it as follows:  $ v_{x} \leq v_{y}$ if the vertex $ v_{x} $ lies in the path $ v_{r}Tv_{y} $. This partial order is called the \textit{tree-order} on $ V(T) $ associated with the rooted tree $ T $ with root vertex $ v_{r} $ \cite{Diestel}. A rooted spanning tree $ T $ of a connected graph $ G $ is said to be a \textit{normal spanning tree} if  any two adjacent vertices of $G $ are comparable with respect to the tree ordering. Whenever we  consider $ T $ as a normal spanning tree of a connected graph $ G $, we assume that $ T $ is a rooted tree with some vertex as its root.	If  $ G $ is a connected graph, then $ G $ has a normal spanning tree with any specified vertex as its root \cite[Proposition 1.5.6]{Diestel} . Let $ G $ be a connected graph with spanning tree $ T $. Then for each edge $ e \in  E(G)\setminus E(T) $, adding the edge $e$ to $T$ creates a unique cycle in $T \cup \{e\}$. This cycle  is called a \textit{ fundamental cycle} of $ G $.

In this article, we call $ \overrightarrow{e_{st}} $  a directed edge in the context of digraphs, and we call the same as an oriented edge in the context of gain graphs.
For any simple graph $ G $, each undirected edge $ e_{st} \in E(G) $ is associated with a  pair of oriented edges, namely $ \overrightarrow{e_{st}} $ and $ \overrightarrow{e_{ts}} $. Set of all such oriented edges of a simple graph $ G $ is known as \textit{the oriented edge set} of $ G $, and is denoted by $ \overrightarrow{E}(G) $. A \textit{ $\mathbb{T}$-gain graph (or complex unit gain graph)} on a simple graph $ G $ is a triplet $\Phi=(G,\mathbb{T},\varphi)$ such that  the map (\emph{the gain function}) $\varphi:\overrightarrow{E}(G)\rightarrow\mathbb{T}$  satisfies $\varphi(\overrightarrow{e_{st}})=\varphi(\overrightarrow{e_{ts}})^{-1}$. That is, for an oriented edge $\overrightarrow{e_{st}}$, if we assign a value $g$ (the \textit{gain} of the edge $ \overrightarrow{e_{st}}$) from $\mathbb{T}$, then assign $g^{-1 }$ to the oriented edge $\overrightarrow{e_{ts}}$. For simplicity, we use $\Phi=(G,\varphi)$ to denote a $\mathbb{T}$-gain graph instead of $\Phi=(G,\mathbb{T},\varphi)$. We call $ \varphi $  a $ \mathbb{T}$-gain on $ G $ if $ \Phi=(G, \varphi) $ is a $ \mathbb{T} $-gain graph on $ G $. In \cite{reff1},  the author studied the notion of the adjacency matrix $A(\Phi)=(a_{st})_{n\times n}$ of a $\mathbb{T}$-gain graph $\Phi$. The entries of $A(\Phi)$ are given by
$$a_{st}=\begin{cases}
	\varphi(\overrightarrow{e_{st}})&\text{if } \mbox{$v_s\sim v_t$},\\
	0&\text{otherwise.}\end{cases}$$
It is clear that the matrix $A(\Phi)$ is Hermitian, and hence its eigenvalues are real. When $\varphi(\overrightarrow{e_{st}})=1$ for all $\overrightarrow{e_{st}}$, then $A(\Phi)=A(G)$. Thus we can consider $G$ as a $\mathbb{T}$-gain graph and we write this by $(G,1)$.

For a square matrix $ B $ with complex entries, $\spec(B)$ and $\rho(B)$ denote the spectrum and the spectral radius of $ B $, respectively.

A cycle is called {\emph{a directed cycle}} if its all edges are oriented in the same direction. A directed cycle obtained from  a cycle $C$ is denoted by $ \overrightarrow{C} $. For any cycle, there are only two directed cycles associated with it. Let  $ \overrightarrow{C} \equiv v_{1} \overrightarrow{e_{12}}v_{2}\overrightarrow{e_{23}} \dots v_{k}\overrightarrow{e_{k1}}v_{1} $ be a directed cycle in a $ \mathbb{T}$-gain graph $ \Phi=(G, \varphi) $, then the gain of this cycle, denoted by $\varphi(\overrightarrow{C})$, is defined as $\varphi(\overrightarrow{C}) := \varphi(\overrightarrow{e_{12}})\varphi(\overrightarrow{e_{23}}) \cdots \varphi(\overrightarrow{e_{k1}}) $. If $\varphi(\overrightarrow{C})$ =1, we call the underlying cycle $ C $  neutral in $ \Phi $. A $\mathbb{T} $-gain graph $ \Phi $ is said to be  \textit{balanced} if all the cycles in $ G $ are neutral in $ \Phi $. A \textit{potential function} for $\varphi$ is a function $\psi:V\rightarrow\mathbb{T}$, such that for each edge $e_{ij}$, $\varphi(e_{ij})=\psi(v_i)^{-1}\psi(v_j).$ Given a gain graph $\Phi$, define $-\Phi = (G, -\varphi).$

Any function from the vertex set of $G$ to the  complex unit circle $\mathbb{T}$ is called a \textit{switching function}. Two $\mathbb{T}$-gain graphs $\Phi_1=(G,\varphi_1)$ and $\Phi_2=(G,\varphi_2)$ are said to be \textit{switching equivalent}, denoted by $\Phi_1\sim\Phi_2$, if there is a switching function $\zeta:V(G)\rightarrow\mathbb{T}$ such that $$\varphi_2(\overrightarrow{e_{st}})=\zeta(v_s)^{-1}\varphi_1(\overrightarrow{e_{st}})\zeta(v_t).$$

The switching equivalence of two $\mathbb{T}$-gain graphs can be defined in the following equivalent way: Two $\mathbb{T}$-gain graphs $\Phi_1=(G,\varphi_1)$ and $\Phi_2=(G,\varphi_2)$ are switching equivalent, if there exists a diagonal matrix $D_\zeta$ with diagonal  entries from $\mathbb{T}$ such that
$$A(\Phi_2)=D_\zeta^{-1}A(\Phi_1)D_\zeta.$$

The following result gives an upper bound for the spectral radius of $A(\Phi)$ in terms of the  maximum vertex degree $\Delta$ of $G$.
\begin{theorem} [{\cite[Theorem 4.3]{reff1}}] \label{Theo.12}
	Let $ \varphi $ be a $ \mathbb{T} $-gain on a graph $ G $. Then $ \rho(A(\Phi)) \leq \Delta. $
\end{theorem}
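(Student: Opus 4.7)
The plan is to exploit the fact that $A(\Phi)$ is Hermitian, so its spectral radius coincides with the magnitude of some eigenvalue, and then push the inequality through the eigenvalue equation evaluated at a coordinate of maximum modulus of an eigenvector. This is the standard Gershgorin/Collatz-style argument for bounding spectral radius by the largest weighted row sum, specialized to the $\mathbb{T}$-gain setting where every nonzero entry of $A(\Phi)$ has modulus one.

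First, I would choose an eigenvalue $\lambda$ of $A(\Phi)$ with $|\lambda| = \rho(A(\Phi))$ (such a $\lambda$ exists and is real because $A(\Phi)$ is Hermitian), together with an associated eigenvector $x = (x_1, \dots, x_n)^T \neq 0$. Let $k$ be an index at which $|x_k|$ attains its maximum, so $|x_k| \geq |x_j|$ for all $j$ and $|x_k| > 0$.

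Next, from the eigenvalue relation $A(\Phi) x = \lambda x$ read at the $k$-th coordinate, I would write
\begin{equation*}
\lambda x_k \;=\; \sum_{j=1}^{n} a_{kj} x_j \;=\; \sum_{v_j \sim v_k} \varphi(\overrightarrow{e_{kj}})\, x_j.
\end{equation*}
Taking absolute values and using $|\varphi(\overrightarrow{e_{kj}})| = 1$ together with $|x_j| \leq |x_k|$, I get
\begin{equation*}
|\lambda|\, |x_k| \;\leq\; \sum_{v_j \sim v_k} |x_j| \;\leq\; d(v_k)\, |x_k| \;\leq\; \Delta\, |x_k|.
\end{equation*}
Dividing by $|x_k| > 0$ yields $\rho(A(\Phi)) = |\lambda| \leq \Delta$, which is the desired bound.

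There is essentially no real obstacle here; the only subtle point is ensuring that the spectral radius is actually attained by a real eigenvalue (guaranteed by Hermiticity of $A(\Phi)$, noted right after the definition of $A(\Phi)$ in the excerpt), so that picking an eigenpair and working at the maximum-modulus coordinate is legitimate. Everything else is the triangle inequality plus the fact that all gains lie on the unit circle.
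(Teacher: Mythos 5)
Your proof is correct. Note that the paper does not actually prove this statement: it is quoted verbatim from the literature (Theorem 4.3 of \cite{reff1}), so there is no in-paper argument to compare against. Your maximum-modulus-coordinate argument (Gershgorin/row-sum style) is a valid, self-contained derivation: Hermiticity guarantees a real eigenvalue $\lambda$ with $|\lambda|=\rho(A(\Phi))$, and since every nonzero entry of $A(\Phi)$ has modulus one, the triangle inequality at a coordinate of maximal modulus gives $|\lambda|\,|x_k|\leq d(v_k)\,|x_k|\leq\Delta\,|x_k|$, as you wrote. For what it is worth, the paper's own toolkit offers an alternative two-step route: Theorem \ref{Theo.8} gives $\rho(A(\Phi))\leq\rho(A(G))$ and Theorem \ref{Theo.11} gives $\rho(A(G))\leq\Delta$ (applied component by component if $G$ is disconnected); that route leans on Perron--Frobenius comparison with the underlying unsigned graph, whereas yours is more elementary and makes no reference to $A(G)$ at all.
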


Next, we collect a couple of results related to the spectrum and the spectral radius of adjacency matrices of $\mathbb{T}$-gain graphs.

%
%


%

\begin{theorem}[{\cite[Theorem 4.4]{Our-gain1}}]\label{Theo.9}
	Let $ \varphi $ be a $ \mathbb{T} $-gain on a connected graph $ G $, then $\rho(A(\Phi))=\rho(A(G))$ if and only if either $\Phi$ or $ -\Phi $ is balanced.
\end{theorem}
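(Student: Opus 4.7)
The plan is to treat the ``if'' and ``only if'' directions separately. For the ``if'' direction, if $\Phi$ is balanced then Theorem \ref{Theo.7} directly gives $\sigma(A(\Phi))=\sigma(A(G))$, and in particular $\rho(A(\Phi))=\rho(A(G))$. If instead $-\Phi$ is balanced, then Theorem \ref{Theo.7} applied to $-\Phi$ yields $\sigma(A(-\Phi))=\sigma(A(G))$; since $A(-\Phi)=-A(\Phi)$, this rearranges to $\sigma(A(\Phi))=-\sigma(A(G))$, so the spectral radii again coincide.

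For the ``only if'' direction, set $\mu:=\rho(A(G))=\rho(A(\Phi))$. Because $A(\Phi)$ is Hermitian with real spectrum, either $\mu$ or $-\mu$ lies in $\sigma(A(\Phi))$. Passing from $\Phi$ to $-\Phi$ swaps these two alternatives while preserving $\rho(A(\Phi))=\mu$, so without loss of generality I assume $\mu\in\sigma(A(\Phi))$ and aim to show $\Phi$ is balanced; the case $-\mu\in\sigma(A(\Phi))$ then yields $-\Phi$ balanced by the same argument applied to $-\Phi$. Let $x\in\mathbb{C}^{n}$ be a unit eigenvector with $A(\Phi)x=\mu x$, and write each nonzero component as $x_{s}=|x_{s}|e^{i\theta_{s}}$. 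The Rayleigh quotient gives
$$\mu = x^{*}A(\Phi)x = \operatorname{Re}\sum_{v_{s}\sim v_{t}} \overline{x_{s}}\,\varphi(\overrightarrow{e_{st}})\,x_{t} \;\leq\; \sum_{v_{s}\sim v_{t}} |x_{s}||x_{t}| = |x|^{T}A(G)|x| \;\leq\; \rho(A(G)) = \mu,$$
so equality must hold throughout.

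Equality throughout forces two conclusions. First, $|x|$ attains the Rayleigh maximum of $A(G)$, hence is a Perron eigenvector; since $G$ is connected, irreducibility together with Perron--Frobenius guarantees $|x|>0$ componentwise, and in particular every phase $\theta_{s}$ is well defined. Second, the triangle inequality being tight forces each summand $\overline{x_{s}}\varphi(\overrightarrow{e_{st}})x_{t}$ to be a nonnegative real equal to $|x_{s}||x_{t}|>0$, which (using $|\varphi(\overrightarrow{e_{st}})|=1$) rearranges to $\varphi(\overrightarrow{e_{st}})=e^{i(\theta_{s}-\theta_{t})}$ for every edge $e_{st}$. Setting $\zeta(v_{s}):=e^{-i\theta_{s}}$, this reads $\varphi(\overrightarrow{e_{st}})=\zeta(v_{s})^{-1}\cdot 1 \cdot\zeta(v_{t})$, exhibiting $\Phi$ as switching equivalent to the trivial gain graph $(G,1)$. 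Consequently every cycle has gain $1$ in $\Phi$, so $\Phi$ is balanced.

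The main obstacle I anticipate is the tight termwise analysis of the chained inequalities: one must argue that the \emph{global} equality $\mu=\mu$ forces equality at every individual edge summand, ruling out cancellations between distinct edges, and that the resulting local phase equations assemble consistently into a single switching function on $V(G)$. Connectivity of $G$ is essential, both to invoke Perron--Frobenius (giving strict positivity of $|x|$ so that every phase is defined) and, via Theorem \ref{Theo.7}, to translate switching equivalence to the trivial gain graph back into the cycle-gain statement of balance.
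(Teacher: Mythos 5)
Your proof is correct. Note, however, that the paper does not prove this statement at all: it is quoted verbatim from \cite[Theorem 4.4]{Our-gain1}, so there is no in-paper argument to compare against. Your argument is the natural one and almost certainly the one used in that reference: the ``if'' direction via Theorem \ref{Theo.7} applied to $\Phi$ or to $-\Phi$ (using $A(-\Phi)=-A(\Phi)$), and the ``only if'' direction via the equality case of the Rayleigh-quotient/triangle-inequality chain, where connectivity gives a strictly positive Perron vector $|x|$, tightness forces $\varphi(\overrightarrow{e_{st}})=e^{i(\theta_s-\theta_t)}$ on every edge, and the resulting switching function $\zeta(v_s)=e^{-i\theta_s}$ exhibits $\Phi\sim(G,1)$, whence all cycle gains telescope to $1$. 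One small remark: in your closing paragraph you invoke Theorem \ref{Theo.7} to pass from $\Phi\sim(G,1)$ to balance, but that step needs only the elementary fact that switching preserves cycle gains (the $\zeta$ factors cancel around any cycle), which is what your proof body actually uses; the reduction of the $-\mu$ case to the $+\mu$ case via $-\Phi$ is also handled correctly.
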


\begin{theorem}[{\cite[Theorem 4.6]{Our-gain1}}] \label{Theo.7}
	Let $ \varphi $ be a $ \mathbb{T} $-gain on a connected graph $ G $. Then, $ \spec(A(\Phi)) =\spec(A(G))$ if and only if $ \Phi $ is balanced.
\end{theorem}

The \textit{characteristic polynomial} of an $n \times n$ matrix $ M $  is defined as $ \det(M-xI) $, where $ I $ is the  $n \times n$ identity matrix.
A graph $G$ is called an \textit{elementary graph}, if each of its component is either an edge or a cycle. Let $\mathcal{H}(G)$ denote the collection of all spanning elementary subgraphs of a graph $G$. For any $H \in\mathcal{H}(G)$, let $\mathcal{C}(H)$ denote the collection of all cycles in $H$.
\begin{theorem}[{ \cite[Corollary 3.1]{Our-gain1}}]\label{Theo4}
	Let $\Phi$ be any $\mathbb{T}$-gain graph with the underlying graph $G$. Let $P_\Phi(x)=x^n+a_1x^{n-1}+\cdots+a_n$ be the characteristic polynomial of $A(\Phi)$. Then
	$$a_i=\sum_{H\in\mathcal{H}_i(G)}(-1)^{p(H)}2^{c(H)}\prod_{C\in \mathcal{C}(H)}\Re(\varphi(C)),$$
	where $\mathcal{H}_i(G)$ is the set of all elementary subgraphs of $G$ with $i$ vertices. $ p(H) $ and $ c(H) $ are the number of components and the number of cycles in $ H $, respectively.  $\Re(\varphi(C)) $ is the real part of the gain of a directed cycle $ \overrightarrow{C}$.
\end{theorem}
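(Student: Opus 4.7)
The plan is to expand $\det(xI - A(\Phi))$ via the Leibniz permutation sum and group the non-vanishing contributions according to their underlying elementary subgraph. Since $A(\Phi)$ has zero diagonal, a standard manipulation of the characteristic polynomial yields
$$a_i = (-1)^i \sum_{S \subseteq V(G),\, |S|=i} \det(A(\Phi)[S]),$$
where $A(\Phi)[S]$ denotes the principal submatrix indexed by $S$. Writing $\det(A(\Phi)[S]) = \sum_\pi \mathrm{sgn}(\pi) \prod_{v \in S} a_{v, \pi(v)}$, only derangements of $S$ contribute (because $a_{vv} = 0$), and the product is non-zero only when $v \sim \pi(v)$ in $G$ for every $v \in S$.

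I would then set up a bijection between such non-vanishing derangements $\pi$ of $S$ and pairs consisting of an elementary subgraph $H$ of $G$ on vertex set $S$ together with a choice of orientation for each cycle of $H$ of length at least $3$. In this correspondence, transpositions in the cycle decomposition of $\pi$ correspond to edge components of $H$, while a $k$-cycle $(v_{i_1},\ldots,v_{i_k})$ with $k\geq 3$ corresponds to a cycle of $H$ traced in one of its two directions. A transposition $(v,w)$ contributes $\varphi(\overrightarrow{e_{vw}})\varphi(\overrightarrow{e_{wv}}) = 1$ since $\varphi$ takes values in $\mathbb{T}$, and a directed $k$-cycle contributes $\varphi(\overrightarrow{C})$. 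Summing over the two orientations of each cycle component yields $\varphi(\overrightarrow{C}) + \overline{\varphi(\overrightarrow{C})} = 2\Re(\varphi(\overrightarrow{C}))$, which accounts for the overall factor $2^{c(H)} \prod_{C \in \mathcal{C}(H)} \Re(\varphi(C))$.

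For the sign, using that a $k$-cycle in a symmetric group has sign $(-1)^{k-1}$ and that the cycle lengths of $\pi$ sum to $i$, I would compute $\mathrm{sgn}(\pi) = (-1)^{i - p(H)}$, where $p(H) = e(H) + c(H)$ is the total number of components of $H$ (with $e(H)$ the number of edge components). Assembling the pieces,
$$\det(A(\Phi)[S]) = \sum_{H \subseteq G,\, V(H) = S} (-1)^{i - p(H)}\, 2^{c(H)} \prod_{C \in \mathcal{C}(H)} \Re(\varphi(C)),$$
and the identity $(-1)^i (-1)^{i - p(H)} = (-1)^{p(H)}$ then gives the claimed formula for $a_i$ after summing over all $i$-subsets $S$.

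The main delicate step is the sign bookkeeping: one has to verify that the $\mathrm{sgn}(\pi)$ contribution combined with the overall $(-1)^i$ from the characteristic polynomial convention cleanly produces $(-1)^{p(H)}$, independent of the precise cycle type of $\pi$. Once this is done, everything else is a direct adaptation of the classical Sachs coefficient theorem for ordinary adjacency matrices; the gain-graph information enters the computation only through the observation that each edge-component product collapses to $1$ and the sum of the two orientations of each cycle component produces $2\Re(\varphi(C))$.
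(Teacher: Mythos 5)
Your argument is correct; it is the standard Sachs-type coefficient computation (Leibniz expansion of the principal minors, derangements decomposed into transpositions and longer cycles, the two orientations of each cycle summing to $2\Re(\varphi(C))$, and the sign bookkeeping $(-1)^i(-1)^{i-p(H)}=(-1)^{p(H)}$ all check out). Note that the paper itself gives no proof here — the statement is imported verbatim from [Corollary 3.1, Our-gain1] — so there is nothing to compare against beyond observing that your route is the expected one; the only cosmetic caveat is that you (correctly) work with $\det(xI-A(\Phi))$ to match the monic normalization $P_\Phi(x)=x^n+a_1x^{n-1}+\cdots$, whereas the paper's stated convention $\det(M-xI)$ differs from this by a factor $(-1)^n$.
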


Let $ A$ be an $n \times n$ Hermitian matrix and let the eigenvalues of $ A $ be ordered as $\lambda_{\min}=\lambda_n\leq\lambda_{n-1}\leq \ldots \leq\lambda_1=\lambda_{\max}$. Then $ \lambda_{min} \leq x^{*}Ax \leq \lambda_{\max} $ for any  vector $ x \in \mathbb{C} ^{n}$ with $x^{*}x=1$, with equality in the right-hand (respectively, left-hand) inequality if and only if $ Ax=\lambda_{\max} x$ (respectively, $ Ax=\lambda_{\min}x $). Moreover,

\begin{center}
	$ \lambda_{\max}= \max_{x \neq 0}\frac{x^{*}Ax}{x^{*}x} $ and $ \lambda_{\min}= \min_{x \neq 0}\frac{x^{*}Ax}{x^{*}x} $.
\end{center}

\vspace*{8pt}
The \textit{numerical range } of an $n \times n$ complex matrix $A$ is a subset of the complex numbers $ \mathbb{C} $, defined as follows:
\begin{center}
	$ W(A):=\{\langle Ax, x \rangle: x \in \mathbb{C}^n, \langle x, x \rangle=1  \} $.
\end{center}
For any $n \times n $ complex matrix $A$, the numerical range $W(A)$ is a convex set \cite{nrange}.
\begin{theorem}[{\cite{nrange}}] \label{Theo.5}
	If $A$ is an $n \times n$ Hermitian matrix, then $ W(A)$ is an interval $ [m,M] $, where  $ m=\lambda_{min}(A), M=\lambda_{max}(A) $. Moreover, $ \rho(A) =   \max \{|m|,|M|\} $.
	
\end{theorem}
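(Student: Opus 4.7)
The plan is to verify the three assertions in turn: that $W(A)$ is real, that it coincides with the interval $[m,M]$, and that the spectral radius equals $\max\{|m|,|M|\}$. First I would show $W(A)\subseteq\mathbb{R}$: for any unit vector $x$, the scalar $\langle Ax,x\rangle$ equals its own conjugate $\langle x,Ax\rangle=\overline{\langle Ax,x\rangle}$ because $A^{*}=A$, so it is real.

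Next, I would pin down the endpoints using the Rayleigh quotient inequalities recalled immediately before the theorem in the excerpt. They give $\langle Ax,x\rangle\in[m,M]$ for every unit vector $x$, so $W(A)\subseteq[m,M]$. Conversely, choosing unit eigenvectors $x_{\min}$ and $x_{\max}$ of $A$ corresponding to $\lambda_{\min}=m$ and $\lambda_{\max}=M$, the identities $\langle Ax_{\min},x_{\min}\rangle=m$ and $\langle Ax_{\max},x_{\max}\rangle=M$ show that both endpoints are attained. Since $W(A)$ is convex (this fact was quoted just before the statement, but if one wishes, it follows quickly here by considering the path $x_{t}=\cos(t)x_{\min}+\sin(t)x_{\max}$, possibly after a Gram--Schmidt step, and noting that $\langle Ax_{t},x_{t}\rangle$ depends continuously on $t$ and sweeps from $m$ to $M$), every value between $m$ and $M$ is realized. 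Hence $W(A)=[m,M]$.

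Finally, for the spectral radius statement, because $A$ is Hermitian its spectrum is real and consists of the eigenvalues $\lambda_{n}\leq\cdots\leq\lambda_{1}$, so $m$ and $M$ are precisely the smallest and largest elements of $\sigma(A)$. By definition $\rho(A)=\max_{\lambda\in\sigma(A)}|\lambda|$, and since the maximum of $|\lambda|$ over an ordered real set is attained at one of the two extremes, we get $\rho(A)=\max\{|m|,|M|\}$.

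The steps are essentially routine once one has the Rayleigh quotient characterization and convexity of the numerical range; the only subtle point is convexity, but as this is a classical fact (Toeplitz--Hausdorff) and moreover is already asserted for general complex matrices right above the statement, it can be invoked directly, leaving only the identification of the endpoints as eigenvalues and the elementary observation about $\rho(A)$ for a Hermitian matrix.
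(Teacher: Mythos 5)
Your proof is correct. The paper states this result only as a citation to the literature and supplies no proof of its own, so there is no in-paper argument to compare against; your derivation --- reality of $W(A)$ from $A^{*}=A$, the inclusion $W(A)\subseteq[m,M]$ from the Rayleigh quotient bounds quoted just before the theorem, attainment of both endpoints at unit eigenvectors, convexity (or the explicit path through orthogonal eigenvectors, which are automatically orthogonal when $m\neq M$) to fill in the interval, and the elementary observation that a real spectrum contained in $[m,M]$ with both endpoints attained gives $\rho(A)=\max\{|m|,|M|\}$ --- is the standard, complete argument.
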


\section{Cospectral $ \mathbb{T} $-gain graphs}\label{co-spec}

For any simple graph $ G $, the collection of all $ \mathbb{T} $-gain graphs associated with $ G $ is denoted by $\mathcal{T}_{G}$. Define $ \mathbb{T}_{G}:=\{A(\Phi): \Phi \in \mathcal{T}_{G} \} $. The spectrum of $ A(\Phi) $, denoted by $\spec (A(\Phi))$ (or simply $\spec(\Phi)$), is the  \textit{spectrum} of $ \Phi $. The spectral radius of $A(\Phi)$, denoted by $\rho(A(\Phi))$ (or simply $\rho(\Phi)$), is the spectral radius of $\Phi$. Two gain graphs  $ \Phi_{1}=(G, \varphi_{1})$ and $ \Phi_{2}=(G, \varphi_{2}) $ with the same spectrum are called \textit{cospectral} $ \mathbb{T}$- gain graphs.

A considerable amount of literature is available on constructing cospectral simple graphs. From the $ \mathbb{T}$-gain point of view, we may ask the similar question about the class of cospectral gain graphs on $ G $.
\begin{figure}[hb!]
	\begin{center}
		\includegraphics[scale= 0.85]{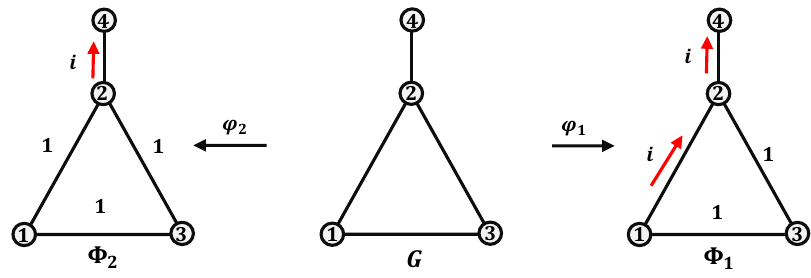}\\
		\caption{ Two  $ \mathbb{T} $-gain graphs $ \Phi_{1} $ and $ \Phi_{2} $ on the same underlying graph $ G $ with different spectrum. } \label{fig1}
	\end{center}
\end{figure}
For the gain graphs given in Figure \ref{fig1}, we have $ \spec (A(\Phi_{1}))=\{-1.9318, -0.5176, 0.5176, 1.9318 \} $ and $ \spec (A(\Phi_{2}))   =\{ 1.4811, -1, 0.3111, 2.1701\} $.
Thus, in spite of having the same underlying graph $ G $, $ A(\Phi_{1}) $ and $A(\Phi_{2}) $ have different spectrum. So on this basis it is natural to ask the following question: For which graph $G$, all the gain graphs defined on $G$ are cospectral. We answer to this in the next theorem. 
\begin{theorem}\label{spec-equi-tree}
	Let $ \Phi=(G,\varphi)$ be a $ \mathbb{T}$-gain graph. Then the following  are equivalent:
	\begin{enumerate}
		\item[(1)] $G$ is a tree.
		\item[(2)] Any two $ \mathbb{T} $-gain graphs on $ G $ are cospectral.
		\item[(3)]  The spectral radius of any two $ \mathbb{T} $-gain graphs on $ G $ are the same.
	\end{enumerate}
\end{theorem}

\begin{proof}
	(1) $\Rightarrow$ (2): Since $ G $ is a tree, all $ \mathbb{T} $-gain graphs on $ G $ are balanced. Therefore, by Theorem \ref{Theo.7}, $ \spec(G) = \spec( \Phi)$, for any $ \Phi $. Hence, any two $ \mathbb{T} $-gain graphs on $ G $ are cospectral. \\
	(2) $\Rightarrow$ (3): Easy to verify. \\
	(3) $\Rightarrow$ (1): By Theorem \ref{Theo.9}, we know that $ \rho(G)=\rho(\Phi) $ if and only if either $ \Phi $ or $ -\Phi $ is balanced. Therefore, $ \rho(\Phi) $ is invariant, which implies that either $ \Phi $ or $ -\Phi $ is balanced for any $ \varphi $.  This is possible only when $ G $ is a tree. Otherwise, if there is a cycle in $G$, then  we can construct a $ \mathbb{T} $-gain graph  $\Phi$  on $ G $ so that,  neither $ \Phi $ nor $ -\Phi $ is balanced.
\end{proof}

Next, we define the notion of a suitably oriented graph. Using this notion, we construct uncountably many matrices in  $ \mathbb{T}_{G} $ having the same spectrum.

\begin{definition}{\em 
		Let $ G $ be a connected graph with vertex set $ V(G)=\{ v_1, v_2, \cdots, v_n\}$ and $ T $ be a normal spanning tree of $ G $ with root vertex $v_r $. A \emph{suitably oriented graph} of $ G $  associated with $ T $ is an oriented graph, denoted by $ \vec{G}_T $, defined by assigning orientation on each edge of $ G $ as follows:
		
		\begin{enumerate}
			\item[(i)] Tree edges are oriented away from the root.
			
			\item[(ii)] Non tree edges are oriented forwards the root. 
	\end{enumerate}}
\end{definition}

A cycle is  \emph{oriented cycle} if each edge of the cycle is oriented in a fixed direction. Note that the oriented cycles are nothing but the directed cycles. By the definition of suitably oriented graph, observe that all the fundamental cycles in $ \vec{G}_{T} $  with respect to $ T $ are oriented, and they are denoted by $ \vec{C}^T_1, \vec{C}^T_2, \dots, \vec{C}^T_{m-n+1} $. In this section, we simply write the oriented fundamental cycles as $ \vec{C}_1, \vec{C}_2, \dots, \vec{C}_{m-n+1} $, when there is no confusion about $ T $.

Let $ \Phi=(G, \varphi) $ be a $ \mathbb{T} $-gain graph on a connected graph $ G $. Let $\vec{e_1}, \vec{e_2}, \dots, \vec{e_m} $ be the oriented edges of $ \vec{G}_T $. Let $ \varphi(\vec{e}_{k})=e^{i\theta_k} $, where $ \theta_k \in [0, 2\pi)$, $ k=1,2, \cdots, m $. Let us consider an $ m $-vector denoted by $ \theta_T(\Phi):=(\theta_{1}, \theta_{2}, \dots, \theta_{m})$. Then, a $ \mathbb{T} $-gain graph $ \Phi=(G, \varphi) $ with a normal spanning tree $T$ can be uniquely identified with $ \theta_T(\Phi)$.

\begin{theorem}\label{Th-3.1.1}
	Let $ \Phi$ and $ \Psi $ be two $ \mathbb{T} $-gain graphs on a connected graph $ G $ with $ m $ edges and $ n $ vertices. If $ \varphi(\vec{C}_k)=\psi(\vec{C}_k)$, for $ k=1,2, \cdots, (m-n+1) $, where $ C_k$'s are the fundamental cycles with respect to some normal spanning tree $T$ of G, then $ \spec(\Phi) =\spec(\Psi)$.
\end{theorem}

\begin{proof}
Let us consider the suitable orientation $ \vec{G}_T $ of $ G $ induced by $T$. Let $ Q $ be the incidence matrix of $ G $ corresponding to the suitable orientation. Suppose $ E( \vec{G}_T)=\{\vec{e}_1, \vec{e}_2, \dots, \vec{e}_m\} $ is the oriented edge set of $ \vec{G}_T $. Let $ C $ be a cycle in $ G $, and $ \vec{C} $ be the orientated cycle associated with $ C $ in  $ \vec{G}_T $. Let $ I(\vec{C}) $ be the $ m \times 1 $ incidence vector $\vec{C}$ where the components are indexed by  the elements of $ E(\vec{G}_T)$. Then the $ j$-th component of $ I(\vec{C}) $ is $ 1 $ if $ \vec{e}_j $ is an edge of $ \vec{C} $, and zero otherwise. Let $ \vec{C}_1, \vec{C}_2, \dots, \vec{C}_{m+n-1} $ be the oriented fundamental cycles of $ \vec{G}_T $. Then it is clear that $ \{ I(\vec{C}_k): k=1,2, \dots, (m-n+1)\} $ forms a basis of null space of $ Q $. Also $ I(\vec{C}) $ belongs to the null space of $ Q $. Thus $I(\vec{C})=\sum\limits_{k=1}^{m-n+1}c_{k}I(\vec{C}_k)$, for some real numbers $c_k$. Now, $ \varphi(\vec{C})=e^{i\langle  I(\vec{C}), \theta_T(\Phi)\rangle}$, where $ \langle ~,\rangle $ denotes the usual inner product. Then
	\begin{align*}
		\varphi(\vec{C}) &=e^{i\langle  I(\vec{C)}, \theta_T(\Phi)\rangle} \\
		&=e^{i \langle c_{1}I(\vec{C}_1)+c_{2}I(\vec{C}_2)+ \dots+ c_{m-n+1}I(\vec{C}_{m-n+1}), \theta_T(\Phi) \rangle }\\
		&=e^{{ c_{1} i \langle I(\vec{C}_1) , \theta_T(\Phi) \rangle }+{c_{2} i \langle I(\vec{C}_2) , \theta_T(\Phi) \rangle }+ \dots +{c_{m-n+1} i\langle  I(\vec{C}_{m-n+1}) , \theta_T(\Phi)\rangle }}\\
		&=\left({e^{i \langle  I(\vec{C}_1) , \theta_T(\Phi) \rangle  }} \right) ^{c_{1}} \left( {e^{i  \langle I(\vec{C}_2) , \theta_T(\Phi) \rangle }}\right)^{c_{2}}\dots \left( {e^{i \langle  I(\vec{C}_{m-n+1}) , \theta_T(\Phi) \rangle  }}\right) ^{c_{m-n+1}}
	\end{align*}
	
	Since $ \psi(\vec{C}_k)={e^{i \langle  I(\vec{C}_{k}) , \theta_T(\Psi) \rangle  }}={e^{i \langle  I(\vec{C}_{k}) , \theta_T(\Phi) \rangle  }}=\varphi(\vec{C}_k)$, for all $ k $, so from the above expression, $ \varphi(\vec{C})= \psi(\vec{C})$.
	
	Let $P_\Phi(x)=x^n+a_1x^{n-1}+\cdots+a_n$ be the characteristic polynomial of $ \Phi=(G, \varphi) $. Then, by Theorem \ref{Theo4},
	$$a_j=\sum_{H\in\mathcal{H}_j(G)}(-1)^{p(H)}2^{c(H)}\prod_{C\in \mathcal{C}(H)}\Re(\varphi(C)), ~~ j=1,2, \dots, n.$$
	Since  $ \varphi(\vec{C})= \psi(\vec{C})$, we have $ \Re( \varphi(C))=\Re(\psi(C))$ for any cycle $ C $, and hence $ P_\Phi(x)=P_\Psi(x) $. 
\end{proof}

\begin{corollary}\label{Cor-3.1.1}
	Let $ \Phi$ and $ \Psi $ be two connected $\mathbb{T}$-gain graphs on $ G $ with $ m $ edges and $ n $ vertices. Let $ C_1, C_2, \cdots, C_{m-n+1} $ be the fundamental cycles of $ G $ associated with a normal spanning tree $ T $. Then $ \Phi \sim \Psi $ if and only if $ \varphi(\vec{C_j})=\psi(\vec{C_j})  $, for $ j=1,2, \cdots, (m-n+1) $.
\end{corollary}


Let $ \Phi_1=(G, \varphi_1) $ and $ \Phi_2=(G, \varphi_1)  $ be two $ \mathbb{T} $-gain graphs. If $ \Re(\varphi_1(C))=\Re(\varphi_2(C)) $, for all cycles $ C $, then, by Theorem \ref{Theo4}, $ \spec(\Phi_1)=\spec(\Phi_2) $. The converse of this statement need not be true.
\begin{example}
	{\rm    Let us consider two $ \mathbb{T} $-gain graphs $ \Phi_1$ and $ \Phi_2$ given in  Figure \ref{fig3.1}. Here $ \spec(\Phi_1)=\spec(\Phi_2)=\{ -2.37, -1.41, -0.59, 0, 0.59, 1.41, 2.37 \} $ but $ \Re(\varphi_1(C_{i})) \ne \Re(\varphi_2(C_{i})) $, $ i=1,2$.  \begin{figure}[!htb]
			\begin{center}
				\includegraphics[scale= 0.77]{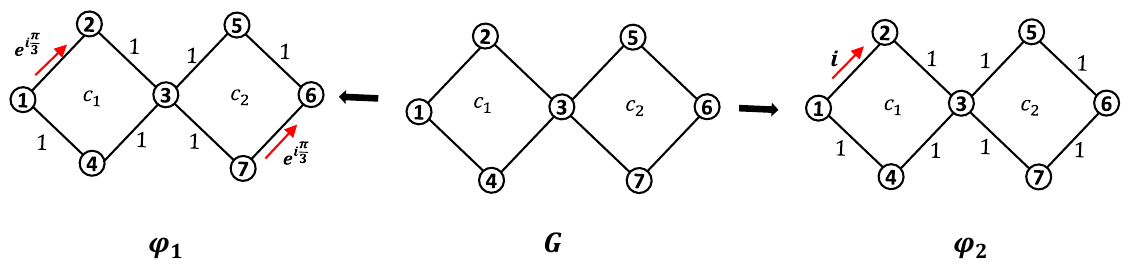}\\
				\caption{Two cospectral graphs with different cycle gains}\label{fig3.1}
			\end{center}
	\end{figure}}
\end{example}

Next, we identify a class of graphs for which $ \spec(\Phi_1)$ and $\spec(\Phi_2)$ are the same if and only if $ \Re(\varphi_{1}(C))= \Re(\varphi_{2}(C)) $ holds for all cycles $ C $ in $ G $.

Let $ \mathcal{S}_{n} $ denote the collection of all connected graphs $ G $ of $ n $ vertices such that for each $ k $, $ 3\leq k\leq n $, $ G $ have at most one cycle of length $ k $. The graphs $ G_1 $ and $ G_2 $ in Figure \ref{fig.4} are elements of $ \mathcal{S}_{10} $. 

\begin{figure}[!htb]
	\begin{center}
		\includegraphics[scale= 0.80]{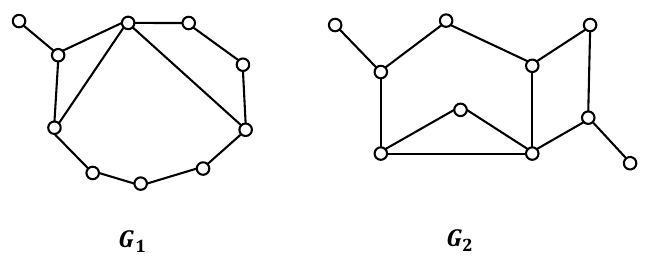}\\
		\caption{Both the graphs $ G_{1} $ and $ G_{2} $ are in $ \mathcal{S}_{10} $} \label{fig.4}
	\end{center}
\end{figure}

\begin{theorem}
	Let $ \Phi_{1} $ and $ \Phi_{2} $ be two $ \mathbb{T} $-gains on $ G \in \mathcal{S}_{n} $. Then $ \Phi_1 $ and $ \Phi_2 $ are cospectral if and only if $ \Re(\varphi_{1}(C))=\Re(\varphi_{2}(C)) $ for all the cycles $ C$  in G.
\end{theorem}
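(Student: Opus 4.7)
The forward direction is essentially a rereading of the proof of Theorem \ref{Theo3.2}. By Theorem \ref{Theo4}, the coefficients $a_i$ of the characteristic polynomial of $A(\Phi)$ depend on $\varphi$ only through the real parts $\Re(\varphi(C))$ of the gains of the cycles $C$ appearing as components of elementary subgraphs. Hence, assuming $\Re(\varphi_1(C))=\Re(\varphi_2(C))$ for every cycle $C$ of $G$, we obtain $P_{\Phi_1}(x)=P_{\Phi_2}(x)$, so $\sigma(A(\Phi_1))=\sigma(A(\Phi_2))$. This direction does not require the hypothesis $G\in\mathcal{D}_n$.

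For the converse, suppose $\sigma(A(\Phi_1))=\sigma(A(\Phi_2))$, so all coefficients $a_i(\Phi_1)$ and $a_i(\Phi_2)$ coincide. Since $G\in\mathcal{D}_n$ has at most one cycle of each length, list all cycles of $G$ as $C^{(1)},C^{(2)},\dots,C^{(\ell)}$ with strictly increasing lengths $k_1<k_2<\dots<k_\ell$. The plan is to prove by strong induction on $j$ that $\Re(\varphi_1(C^{(j)}))=\Re(\varphi_2(C^{(j)}))$ for all $1\le j\le \ell$.

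Fix $j$ and examine $a_{k_j}$. By Theorem \ref{Theo4}, $a_{k_j}$ is a sum of contributions $(-1)^{p(H)}2^{c(H)}\prod_{C\in\mathcal{C}(H)}\Re(\varphi(C))$ over elementary subgraphs $H$ of $G$ on exactly $k_j$ vertices. Every cycle appearing as a component of such $H$ has length at most $k_j$, so by uniqueness of cycle length in $\mathcal{D}_n$ it must be one of $C^{(1)},\dots,C^{(j)}$. If $C^{(j)}$ is a component of $H$, its $k_j$ vertices exhaust $V(H)$, forcing $H=C^{(j)}$ and contributing exactly $-2\,\Re(\varphi(C^{(j)}))$. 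Every other elementary subgraph on $k_j$ vertices has its cycle components (if any) drawn from $\{C^{(1)},\dots,C^{(j-1)}\}$, so by the inductive hypothesis its contribution is the same for $\Phi_1$ and $\Phi_2$; the cycle-free subgraphs (matchings on $k_j$ vertices) contribute terms independent of $\varphi$. Equating $a_{k_j}(\Phi_1)=a_{k_j}(\Phi_2)$ and cancelling the common contributions leaves $-2\,\Re(\varphi_1(C^{(j)}))=-2\,\Re(\varphi_2(C^{(j)}))$, completing the induction (the base case $j=1$ is the same argument with no lower-length cycles present).

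The main obstacle, and the precise reason the hypothesis $G\in\mathcal{D}_n$ is needed, is extracting $\Re(\varphi(C^{(j)}))$ as a single isolable summand in $a_{k_j}$. Uniqueness of the length $k_j$ in $G$ guarantees two things simultaneously: no distinct cycle of $G$ can be confused with $C^{(j)}$ at that length, and the only elementary subgraph on $k_j$ vertices containing $C^{(j)}$ is $C^{(j)}$ itself. Without this, several cycles of length $k_j$ would appear in $a_{k_j}$ with coupled coefficients and only a symmetric combination of their real parts could be recovered, which is exactly the failure illustrated by Figure \ref{fig2}.
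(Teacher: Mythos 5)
Your proposal is correct and follows essentially the same route as the paper: both directions rest on the coefficient formula of Theorem \ref{Theo4}, and the converse is proved by induction on cycle length, isolating the unique length-$k_j$ cycle as the only elementary subgraph on $k_j$ vertices whose contribution is not already controlled by the inductive hypothesis. Your write-up is, if anything, a slightly cleaner rendering of the paper's argument (which phrases the same cancellation in terms of the sums $S_{(m_1,\dots,m_k)}$ over component types), and your closing remark correctly identifies why the hypothesis $G\in\mathcal{D}_n$ is indispensable.
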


\begin{proof}
	Let  us assume that $ \spec(\Phi_{1})=\spec(\Phi_{2}) $. Let
	$ P_{\Phi_{1}}(x)=x^{n}+a_{1}x^{n-1}+ \dots + a_{n} $ and $  P_{\Phi_{2}}(x) =x^{n}+b_{1}x^{n-1}+ \dots + b_{n} $ be the characteristic polynomials of $A(\Phi_{1}) $ and $ A(\Phi_{2}) $, respectively. Therefore, $ a_{i}=b_{i} $ for all $ i$. From Theorem \ref{Theo4},     $$a_i=\sum_{H\in\mathcal{H}_i(G)}(-1)^{p(H)}2^{c(H)}\prod_{C\in \mathcal{C}(H)} \Re (\varphi_{1}(C)), b_i=\sum_{H\in\mathcal{H}_i(G)}(-1)^{p(H)}2^{c(H)}\prod_{C\in \mathcal{C}(H)} \Re(\varphi_{2}(C)) .$$
	Let $ Com{(m_{1}, m_{2},\dots,m_{k})} $ denote the collection of all elementary subgraphs $ H $ with $ k $ components such that the number of vertices in each component are $ m_{1}, m_{2}, \dots , m_{k}$, respectively, where $ m_{1} \geq m_{2} \geq \dots \geq m_{k} $. Let $ m=m_{1}+m_{2}+ \dots+m_{k} $ and $ S_{{(m_{1}, m_{2},\dots,m_{k})}}(\varphi_{1}) $ be the sum of the terms in the expression of $ a_{m} $ associated with the elementary subgraphs in $ Com{(m_{1}, m_{2},\dots,m_{k})} $. It is clear that $ a_{1}=b_{1} $, $ a_{2}=b_{2} $ for any $ \varphi_{1} $ and $ \varphi_{2} $. $ a_{3}=b_{3} $ gives us that $ \Re(\varphi_{1}(C))=\Re(\varphi_{2}(C)),$ where $ C $ denotes a cycle on three vertices if exists. By induction, we may assume that $ a_{j}=b_{j} $ for each $ j<m $ which implies that $ \Re(\varphi_{1}(C_{j}))=\Re(\varphi_{2}(C_{j})) $ for $ j<m $, where $C_{j} $ is the unique cycle of length $ j $ if exists. Now, consider $ a_{m}=b_{m} $, therefore we have
	\begin{equation} \label{eq1}
		S_{(m)}(\varphi_{1})+S_{(m-2,2)}(\varphi_{1})+S_{(m-3,3)} (\varphi_{1})+\cdots + S_{(m_{1}, m_{2},\dots,m_{k})}(\varphi_{1})+ \cdots $$
		$$ = S_{(m)}(\varphi_{2})+S_{(m-2, 2)}(\varphi_{2})+S_{(m-3,3)}(\varphi_{2})+\dots + S_{(m_{1}, m_{2},\dots,m_{k})}(\varphi_{2})+ \cdots
	\end{equation}
	From the above expression, if a representing term  $ S_{(m_{1}, m_{2},\dots,m_{k})}(\varphi_{1}) $ is nonzero, then we have 
	$$ S_{(m_{1}, m_{2},\dots,m_{k})}(\varphi_{1}) = \sum (-1)^{k}2^{l}\Re( \varphi_{1}(C_{m_{1}}))\Re(\varphi_{1}(C_{m_{2}})) \cdots \Re(\varphi_{1}(C_{m_{l}})),$$ $m_{l}>2$, 	 $ m_{l+1}=\dots = m_{k}=2 $ and  the sum runs over all the elementary subgraphs in $ Com{(m_{1}, m_{2},\dots,m_{k})}$.
	
	Therefore, except the first term from both sides of  Equation (\ref{eq1}), for any other term,\break $ S_{(m_{1}, m_{2},\dots,m_{k})}(\varphi_{1})= S_{(m_{1}, m_{2},\dots,m_{k})}(\varphi_{2})$ due to $ m_{1} < m $. Hence from the Equation (\ref{eq1}), we have $ S_{(m)}(\varphi_{1})=S_{(m)}(\varphi_{2})$. That is, $ \Re(\varphi_{1}(C_{m}))=\Re(\varphi_{2}(C_{m})) $. Thus, by induction hypothesis,  $ \Re(\varphi_{1}(C))=\Re(\varphi_{2}(C)) $ for all cycles $ C $  in $ G $. Converse is easy to verify.
\end{proof}

We close this section by posing the following question: Characterize all graphs $G$  for which $ \spec(\Phi_1)$ and $\spec(\Phi_2)$ are the same if and only if $ \Re(\varphi_{1}(C))= \Re(\varphi_{2}(C)) $ holds for all cycles $ C $ in $ G $.

\section{ Spectral radius and the largest eigenvalue}\label{spec_redius1}


For an undirected graph $ G $, by the Perron-Frobenius theorem,  the spectral radius of $A(G)$ is same as the largest eigenvalue of $A(G)$. That is, $ \rho(G)=\lambda_1(G) $. For any mixed graph $ X $, Guo and Mohar \cite{Bojan} established that $ \lambda_{1}(H(X)) \leq \rho(H(X))\leq 3\lambda_{1}(H(X))$, where $ H(X) $ is the Hermitian adjacency matrix of $ X $. Later, in \cite{mohar-new} Mohar extended the bounds for $ N^\alpha(X) $. For the defintion of $N^\alpha(X)$, we refer to \cite{mohar-new}. We note that the above inequality need not be true for  $ \mathbb{T} $-gain graphs. 
\begin{example}\label{Ex4.1}
	Let $ \Phi=(K_n, \varphi) $, where $K_n$ is the complete graph on $n \geq 5$ vertices,  be a $ \mathbb{T} $-gain graph on $ K_n $ such that $ \varphi(\vec{e})=-1$, for any edge $ e $ of $ K_n $. Then $ \lambda_1(\Phi)=1 $ and $ \rho(\Phi)=n-1 > 3\lambda_{1}(\Phi)$. 
\end{example}
 Our main objective in this section is to study the classes of $\mathbb{T}$-gain graphs for which the above  inequality holds. First we show that if the real parts of the gains of a gain graph $\Phi$ are nonnegative, then the above inequality holds (irrespective of the structure of the underlying graph). Then we construct a family of undirected graphs, viz.  $ \mathcal{F} $-graphs, for which the above inequality holds irrespective of the gains defined on the edges. 


\begin{theorem}\label{main4.1}\label{Theo.5.1}
	Let $ \Phi=(G, \varphi)$ be any $ \mathbb{T} $-gain graph on a connected graph $ G $ such that $ \varphi(\overrightarrow{E}(G)) \subseteq \{a+ib \in \mathbb{T}: a \geq 0 \} $. Then $ \lambda_{1}(\Phi)\leq \rho(\Phi)\leq3\lambda_{1}(\Phi)$.
\end{theorem}

\begin{proof}
	Let  $ \lambda_{1} \geq \lambda_{2} \geq \dots\geq \lambda_{n} $ be the eigenvalues of $A(\Phi)$. Let us split the adjacency matrix $ A(\Phi) = P+R $, where $ P = \Re(A( \Phi )) $, the real part of $A(\Phi)$, and $ R  = A(\Phi) - \Re(A( \Phi ))$. It is easy to see that the entries of the matrix $R$ are either zero or purely imaginary.  For every $ x\in \mathbb{R}^{n}$, $x^{T}Rx=0 $. Thus, for $ x\in \mathbb{R}^{n}$, we get $ x^{T}A(\Phi) x =x^{T}P x $. Now,
	\begin{align*}
		\lambda_{1}(P)& =\max_{x\in \mathbb{R}^{n}, ||x||=1}(x^{T}Px)\\
		&=\max_{x\in \mathbb{R}^{n}, ||x||=1}(x^{T}A(\Phi) x)\\
		&\leq \max_{x\in \mathbb{C}^{n}, ||z||=1}(z^{*}A(\Phi) z)\\
		&=\lambda_{1}(A(\Phi)).
	\end{align*}
	
	Suppose that $ \rho(A(\Phi))> 3\lambda_{1}(A(\Phi)) $. Then $ \lambda_{1}(P)\leq \lambda_{1}(A(\Phi))< \frac{1}{3} \rho(A(\Phi))$. Since $ \rho(A(\Phi))=\Big|\max\limits_{z\in \mathbb{C}^{n},||z||=1}(z^{*}A(\Phi) z)\Big|=|w^{*}A(\Phi) w|$, for some $ w\in \mathbb{C}^{n}$ with $ ||w||=1$, we have
	\begin{align*}
		\rho(A(\Phi))&=|w^{*}A(\Phi) w|\\
		&=|w^{*}P w+w^{*}R w|\\
		&\leq |w^{*}P w|+|w^{*}R w|  ~~~~~~[\mbox{By the triangle inequality}]\\
		&\leq |w^{*}R w|+\rho(P) \\
		&=|w^{*}R w|+\lambda_{1}(P) ~~~~~~[\mbox{By the Perron-Frobenius theorem}]\\
		&\leq |w^{*}R w|+\frac{1}{3}\rho(A(\Phi)).
	\end{align*}
	Thus $ |w^{*}R w|\geq\frac{2}{3}\rho(A(\Phi)).$
	Now, the matrix $ R $ can be written as $ R=iL $, where $ L $ is a skew-symmetric matrix with real entries. Therefore, the eigenvalues of $ L $ are either zero or purely imaginary and they occur in conjugate pairs. Hence the eigenvalues of $ R $ are  symmetric about the origin. Since $ R$ is a Hermitian matrix, so by the Theorem \ref{Theo.5}, the numerical range of $ R $ is $ [-\rho(R),\rho(R)] $. Therefore $ w^{*}R w \in [-\rho(R),\rho(R)] $ and hence there exists $ y \in \mathbb{C}^{n}$ with $ || y||=1 $ such that $ y^{*}Ry  = |w^{*}Rw|$. Now,
	
	\begin{align*}
		\lambda_{1}(A(\Phi))&=\max_{z\in \mathbb{C}^{n}, ||z||=1}(z^{*}A(\Phi) z)\\
		& \geq y^{*}A(\Phi) y\\
		&= y^{*} P y+y^{*} R y
	\end{align*}
	
	By the above observation, $y^{*}Ry  = |w^{*}Rw|\geq \frac{2}{3}\rho(A(\Phi)) $. Also, we have  $y^{*} P y \geq -\rho(P)$; and by the Perron-Frobenius theorem, $ \rho(P)=\lambda_1(P)< \frac{1}{3}\rho(A(\Phi)) $. Therefore,
	\begin{align*}
		\lambda_{1}(A(\Phi))\geq y^{*} P y+y^{*} R y \geq \frac{2}{3} \rho(A(\Phi))-\frac{1}{3} \rho(A(\Phi)) \geq \frac{1}{3} \rho (A(\Phi)),
	\end{align*}
	which is absurd. Thus,  we have  $\lambda_{1}(\Phi)\leq \rho(\Phi)\leq3\lambda_{1}(\Phi).$
\end{proof}

 For $ j=1,2 $, let $ G_{j}=( V(G_{j}), E(G_{j}))$ be two subgraphs of $ G $. Then the sum (or union) of these subgraphs $G_1$ and $G_2$ is denoted by $G_1 + G_2$, and is defined as the subgraph which consists of all the edges in $G_1$ or $G_2$ or both (see \cite[Page 12]{ore}). 

Let $G$ be a connected graph and  $T$ be a normal spanning tree of $G$. Let  $\{ C_{r_1}, C_{r_2}, \dots, C_{r_s}\}$ be the fundamental cycles associated with the normal spanning tree $ T $. Note that $|E(C_{r_j} \setminus (C_{r_1}+ C_{r_2}+ \dots + C_{r_{j-1}}))| \geq 1$ for $ j =2, 3, \dots, s$. 



\begin{definition}\label{f-graph}{\em
	A connected graph $ G $ is called an \textit{$ \mathcal{F} $-graph} if there is a sequence $ (C_{r_1}, C_{r_2}, \cdots, C_{r_s}) $ of all fundamental cycles of $ G $ with respect to some normal spanning tree $T$ of $ G $ which satisfy the following: $|E(C_{r_j} \setminus (C_{r_1}+ C_{r_2}+ \cdots + C_{r_{j-1}}))|>1$, for all $ j=2, 3, \dots, s $.}
\end{definition}

If  $ C_{1}, C_{2}, \dots, C_{s} $ are the fundamental  cycles of an $ \mathcal{F} $-graph $ G $, then, without
loss of generality, we assume that the sequence $ (C_{1}, C_{2}, \dots, C_{s}) $ satisfies the above property. 
%
%


The girth of an undirected graph is the length of a shortest cycle contained in the graph. 
\begin{prop}
	Let $ G $ be an $ \mathcal{F} $-graph of girth $ r $ and with $m$ edges and $ n $ vertices. Then  $ r\leq 2n-m.$
\end{prop}

\begin{proof}
	Let $ G $ be an $\mathcal{F} $-graph with $ m $ edges and $ n $ vertices. Let $T$ be a normal spanning tree of $ G $, and $ C_1, C_2, \cdots, C_{m-n+1} $ are the fundamental cycles of $ G $ with respect to $ T $ which have the property stated in Definition \ref{f-graph}. Since all the above cycles are fundamental cycles, so $ |E(C_j)\cap E(T) |=|E(C_j)|-1$. By the definition of $ \mathcal{F} $-graphs, $ |E(C_2)\setminus E(C_1) |\geq 2 $, so $|E(C_1+C_2)\cap E(T) | \geq |E(C_1)\cap E(T)|+1=|E(C_1)|$. Again $ |E(C_3)\setminus E(C_1+C_2) |\geq 2 $, so  $|E(C_1+C_2+C_3)\cap E(T) | \geq |E(C_1+C_2)\cap E(T)|+1\geq |E(C_1)|+1 $. By continuing this process, we get 
	$ |E(C_1+C_2+C_3+\cdots+C_{m-n+1})\cap E(T) | \geq |E(C_1)|+m-n-1$. Now, $ (n-1)=|E(T)|\geq|E(C_1+C_2+C_3+\cdots+C_{m-n+1})\cap E(T) |\geq r+(m-n-1)$ and hence $ r\leq 2n-m $.
\end{proof}

Let us give some examples and non-examples of $ \mathcal{F}$-graphs.
\begin{enumerate}
	\item[(1)] Any graph with vertex disjoint cycles is an $ \mathcal{F} $-graph.
	\item [(2)] If $ G $ is an $ \mathcal{F} $-graph, then every subgraph of $ G $ is also an $ \mathcal{F} $-graph.
	\item [(3)] A graph obtained by subdivision of every edge of a connected graph is an $ \mathcal{F} $-graph.
	\item[(4)] $ K_5 $ is not an $ \mathcal{F} $-graph. 
\end{enumerate}

Now, we present one of the main results of this section.

\begin{theorem}\label{Th-3.2.1}
	Let $ \Phi=(G, \varphi) $ be any $ \mathbb{T} $-gain graph on an $ \mathcal{F} $-graph $ G $ with $n$ vertices and $m$ edges. Then $$ \lambda_1(\Phi)\leq \rho(\Phi)\leq 3\lambda_1(\Phi).$$	
\end{theorem}
\begin{proof}	

 Let $ C_{1}, C_{2}, \dots, C_{m-n+1} $ be the fundamental cycles of $ G $ with respect to a normal spanning tree $T$, and the cycles statisfy the property given in Definition \ref{f-graph}. Let $ \varphi(\vec{C}_k)=e^{ir_k}$, for $ k=1,2, \dots, (m-n+1) $, where $ \vec{C}_k $'s are the oriented fundamental cycles in $ \vec{G}_T $. In the proof, we always scale $ r_k $ to the region $ (-\pi, \pi] $ for all $ k $. 
	
	\noindent \textbf{Claim:} There is a $ \mathbb{T} $-gain graph $ \Psi=(G, \psi) $ such that $ \Psi \sim \Phi $ and $ \Re(A(\Psi))\geq 0 $.
	
	\noindent Let us construct a $ \mathbb{T} $-gain graph $ \Psi=(G, \varphi) $ such that $ \psi(\vec{C}_k)=\varphi(\vec{C}_k)$ for all $ k $. Then $ \Psi \sim \Phi $. Now, define the gain function $\psi$ recursively as follows:  Define $ \theta_{1}=\frac{{r_{1}}}{|E(C_{1})|} \in [-\frac{\pi}{2}, \frac{\pi}{2}]$, where $ |E(C_{1})|$ is the number of edges of $ C_{1} $. For each edge   $\vec{e}$ of $ \vec{C}_{1} $, define $ \psi(\vec{e})=e^{i\theta_{1}}$. Then for each edge in $ E(\vec{C}_{2}) \setminus E(\vec{C}_{1}) $,  assign the gain as follows: Let $ |E(C_{1})\cap E(C_{2} )| =l_{1}$ and $ |E(C_{2})\setminus E(C_{1})|=k_{1} $. Consider $ \gamma_{1}= (r_{2}-\theta_{1}l_{1})~(\mod 2\pi)$ and scale it to the interval $ \left(-\pi, \pi \right] $. Let $ \theta_2=\frac{\gamma_{1}}{k_1} $. For each edge $ \vec{e} $ in $ E(\vec{C}_{2}) \setminus E(\vec{C}_{1}) $, define $ \psi(\vec{e})=e^{i\theta_2} $. Since $ G $ is an $ \mathcal{F} $-graph, so $ k_1\geq 2 $. Thus $ \theta_2 \in [-\frac{\pi}{2}, \frac{\pi}{2}] $. At the end of the process, we get $ e^{ i\theta_{1}}, e^{ i\theta_{2}}, \dots, e^{ i\theta_{m}}$ are the gains of the oriented edges of $G $, where $ \theta_{t}\in [-\frac{\pi}{2}, \frac{\pi}{2}]$ for $ t=1,2, \dots, m $. Since under this procedure the gains of the cycles $ \vec{C}_{1}, \vec{C}_{2}, \dots , \vec{C}_{m-n+1}$ remain unchanged, i.e., $ \varphi(\vec{C}_k)=\psi(\vec{C}_k)$, for $ k=1,2, \cdots, (m-n+1) $, so $ \Phi \sim \Psi $. Also $ \Re(\psi(\vec{e})) \geq 0$, for all edge $ e $ in $ G $. Thus $ \Re(A(\Psi)) \geq 0$.
	Now the result follows from Theorem \ref{main4.1}.
	
\end{proof}

\begin{corollary}
	Let $ G $ be a graph with cycles that are vertex disjoint. If $ \Phi=(G, \varphi) $ is any $ \mathbb{T} $-gain graph, then $ \lambda_{1}(\Phi)\leq \rho(\Phi) \leq 3 \lambda_{1}(\Phi)$.
\end{corollary}

\begin{corollary}
	Let $ G $ be a graph obtained by subdivision of each edge of some connected graph. Let $ \Phi=(G, \varphi) $ be any $ \mathbb{T} $-gain graphs on $ G $. Then $ \lambda_{1}(\Phi)\leq \rho(\Phi) \leq 3 \lambda_{1}(\Phi)$.
\end{corollary}

\section{Spectral radius and the largest vertex degree}\label{spec-bound}


It is well known that, if $ G $ is a simple connected graph with maximum vertex degree $\Delta$, then $ \rho(G) \leq \Delta$. Furthermore, equality holds if and only if $ G $ is regular. In this section, we first  extend this result for $\mathbb{T}$-gain graphs and characterize the extremal cases in terms of the gains defined on them. 

\begin{theorem}\label{Theo.4.5}
	Let $ \Phi $ be a $ \mathbb{T}$-gain graph on a connected graph $ G $ with the largest vertex degree $ \Delta $. Then $ \rho(\Phi)\leq\Delta $ and equality occurs if and only if $ G $ is $ \Delta $-regular and either $ \Phi $ or $ -\Phi $ is balanced.
\end{theorem}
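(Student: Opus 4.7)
The proof will be a short chaining argument combining three previously stated results: Theorem \ref{Theo.8} ($\rho(A(\Phi)) \leq \rho(A(G))$), Theorem \ref{Theo.11} ($\rho(A(G)) \leq \Delta$ with equality iff $G$ is regular), and Theorem \ref{Theo.9} ($\rho(A(\Phi)) = \rho(A(G))$ iff $\Phi$ or $-\Phi$ is balanced). No new calculation should really be needed.

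For the sufficiency direction, I will assume $G$ is $\Delta$-regular and that either $\Phi$ or $-\Phi$ is balanced. Theorem \ref{Theo.11} gives $\rho(A(G)) = \Delta$ directly from $\Delta$-regularity, and Theorem \ref{Theo.9} gives $\rho(A(\Phi)) = \rho(A(G))$ from the balanced hypothesis. Combining these two equalities yields $\rho(A(\Phi)) = \Delta$.

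For the necessity direction, assume $\rho(A(\Phi)) = \Delta$. Chain Theorem \ref{Theo.8} with Theorem \ref{Theo.11} to get
\[
\Delta = \rho(A(\Phi)) \leq \rho(A(G)) \leq \Delta,
\]
so equality holds throughout. The rightmost equality $\rho(A(G)) = \Delta$ forces $G$ to be $\Delta$-regular by Theorem \ref{Theo.11}. The other equality $\rho(A(\Phi)) = \rho(A(G))$ forces $\Phi$ or $-\Phi$ to be balanced by Theorem \ref{Theo.9}. Both conclusions of the theorem follow.

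Since both directions reduce to a two-line deduction from results already cited, there is no substantial obstacle in the proof; the only thing to be careful about is to make clear that the two equalities in the sandwiched chain are extracted independently and fed into the two separate characterizations.
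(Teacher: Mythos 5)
Your proof is correct and follows essentially the same route as the paper, which simply cites Theorem \ref{Theo.9} and Theorem \ref{Theo.11}; your explicit use of Theorem \ref{Theo.8} to set up the sandwiched chain is just the inequality implicit in that combination. Nothing further is needed.
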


\begin{proof}
	The proof follows from the above discussion, Theorem \ref{Theo.12} and Theorem \ref{Theo.9}.
\end{proof}


Our next goal is to study the structural properties of $\mathbb{T}$-gain graphs for which $ \rho(\Phi)= \Delta $ hold.  Unfortunately, we do not get a nice structure for the arbitrary $\mathbb{T}$-gain graphs. Nevertheless, for particular choices of the gains, we do get interesting structural characterization. Next, we introduce a generalization of Hermitian adjacency matrices of mixed graphs, for which the sharpness of the bounds in Theorem \ref{Theo.4.5} can be illustrated nicely. 

\begin{definition}
	Let $ X $ be a mixed graph and $k\in \mathbb{N} $. The $ k$\textit{-generalized Hermitian adjacency matrix} of $ X $ is denoted by $ H_{k}(X) $ and its $ (s,t)th $ entry is defined by $$h_{st}=\begin{cases}
		1& \text{if } \mbox{$e_{st} \in E(X)$},\\
 		e^{\frac{i\pi}{k+1}}& \text{if  } \mbox{$\overrightarrow{e_{st}} \in E(X)$ \text{and} $\overrightarrow{e_{ts}} \notin E(X)$},\\
		e^{-\frac{i\pi}{k+1}}& \text{if  } \mbox{$\overrightarrow{e_{st}} \notin E(X)$ \text{and} $\overrightarrow{e_{ts}} \in E(X)$},\\
		0&\text{otherwise.}\end{cases}$$
\end{definition}

It is easy to see that,  $ k$-generalized Hermitian adjacency matrices of mixed graphs are special case of $\mathbb{T}$-gain graphs with gains are from the set $\{1, e^\frac{i\pi}{k+1}, e^{-\frac{i\pi}{k+1}}\}.$
For each $ k\in \mathbb{N}$, we can associate  a unique matrix  $ H_{k}(X) $ with a given digraph $X$. If $ k=1 $, then $ H_1(X) $ is the Hermitian adjacency matrix of $X$. If $ k=2 $, then $ H_2(X) $ is the Hermitian adjacency matrix of second kind, studied in \cite{mohar-new}.

\vspace{6pt}
For a mixed graph $ X $, the underlying graph of $ X $ is denoted by $ \Gamma(X) $.
  Now we establish the main result of this section.

\vspace{7pt}
%
%
%

\begin{theorem}\label{Theo5.5}
	Let $ X $ be a mixed graph and $ \Delta $ be the largest vertex degree of $ \Gamma(X) $. Then $ \rho(H_{k}(X)) \leq \Delta $  for all $ k\in \mathbb{N} $. Furthermore, if $ X $ is weakly connected, then equality holds if and only if $ X $ is $ \Delta $-regular and there is a partition of the vertex set of $ X $ into $ (2k+2) $ parts (possibly empty) $ V_{0}, V_{\theta}, \dots, V_{(2k+1)\theta}$, where $ \theta=\frac{\pi}{k+1} $, such that $ X $ has one of the following structures (see Figure \ref{fig14}):
	\begin{enumerate}
		\item \textbf{Structure A:} Mixed graph induced by each vertex set $V_{m\theta} $, for each $ m\in \{0,1, \dots, (2k+1)\}$ contains only undirected edges. For each directed edge $ \overrightarrow{e_{st}} $ of $ X $, if $ v_{s}\in V_{p\theta } $, for some $ p \in \{0,1, \dots, (2k+1)\}$, then $ v_{t}\in V_{((p+1)\theta)~( \mod~2\pi)} $.
		\item \textbf{Structure B:} For every undirected edge $ e_{st} $ in $ X $, if $ v_{s} \in V_{m\theta}$ for some $ m $, then $ v_{t}\in V_{(\pi+m\theta)~( \mod~2\pi)}$.  That is, each vertex set $ V_{m\theta } $, for $  m\in \{0,1, \dots, (2k+1)\},$ is an independent set. Every directed edge $ \overrightarrow{e_{st}} $ in $ X $, if $ v_{s}\in V_{m\theta} $ for some $ m $, then $ v_{t}\in V_{(\pi+(1+m)\theta)~(\mod~2\pi)} $.
	\end{enumerate}
\end{theorem}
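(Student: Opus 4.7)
The plan is to present $H_{k}(X)$ as the adjacency matrix $A(\Phi)$ of the $\mathbb{T}$-gain graph $\Phi = (\Gamma(X), \varphi)$ whose gain function assigns values in $\{1, e^{i\theta}, e^{-i\theta}\}$ with $\theta = \pi/(k+1)$. The bound $\rho(H_{k}(X)) \leq \Delta$ is then immediate from Theorem \ref{Theo.12}. Equality will be characterized via Theorem \ref{Theo.4.5}: $\rho(A(\Phi)) = \Delta$ iff $\Gamma(X)$ is $\Delta$-regular and either $\Phi$ or $-\Phi$ is balanced. Structure A will correspond to the case $\Phi$ is balanced, and Structure B to the case $-\Phi$ is balanced.

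For the forward direction in the balanced case, I will write $A(\Phi) = D^{*} A(\Gamma(X)) D$ for some diagonal unitary $D = \mathrm{diag}(d_{v_{1}}, \ldots, d_{v_{n}})$, so that $\varphi(\overrightarrow{e_{st}}) = \overline{d_{v_{s}}} d_{v_{t}} \in \{1, e^{\pm i \theta}\}$ along each edge of $X$. Fixing a root $d_{v_{0}} = 1$ and invoking weak connectivity of $X$, every $d_{v}$ becomes a product of such ratios along a path from $v_{0}$ to $v$ and therefore lies in the cyclic group $\{e^{im\theta} : m = 0, 1, \ldots, 2k+1\}$. Setting $V_{m\theta} := \{v : d_{v} = e^{im\theta}\}$, a case analysis on the three possible values of $\varphi(\overrightarrow{e_{st}})$ then shows that digons (gain $1$) stay inside a single $V_{m\theta}$ while non-digon directed edges map $V_{p\theta}$ into $V_{((p+1)\theta)\,\bmod\,2\pi}$, which is exactly Structure A. If instead $-\Phi$ is balanced, I obtain $d_{v_{t}}/d_{v_{s}} = -\varphi(\overrightarrow{e_{st}}) \in \{-1, -e^{\pm i \theta}\}$; exploiting $\pi = (k+1)\theta$, these ratios are still integer powers of $e^{i\theta}$, so the same partition into $V_{m\theta}$ is obtained, and the analogous three-way case analysis recovers Structure B (digons across classes differing by $\pi$, and other directed edges going from $V_{m\theta}$ into $V_{(\pi + (m+1)\theta)\,\bmod\,2\pi}$).

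For the converse, given Structure A, I will define $\zeta(v) := e^{-im\theta}$ for $v \in V_{m\theta}$ and verify directly that $\zeta(v_{s})^{-1} \varphi(\overrightarrow{e_{st}}) \zeta(v_{t}) = 1$ in both the digon and non-digon cases, showing $\Phi \sim (\Gamma(X), 1)$ is balanced; given Structure B, the same $\zeta$ produces switched gain $-1$ on every edge, proving $-\Phi$ is balanced. Theorem \ref{Theo.4.5} then yields $\rho(H_{k}(X)) = \Delta$ in each case. The main obstacle will be the mod-$(2k+2)$ bookkeeping on the coset indices and cleanly separating the digon from the non-digon case within each structural alternative; once the partition $\{V_{m\theta}\}$ is correctly read off from the diagonal unitary $D$, the rest reduces to elementary verification.
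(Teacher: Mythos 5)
Your proposal is correct and follows the paper's skeleton exactly up to the decisive step: both reduce $H_{k}(X)$ to $A(\Phi)$ with gains in $\{1, e^{\pm i\theta}\}$, obtain the bound from Theorem \ref{Theo.12}, and invoke Theorem \ref{Theo.4.5} to reduce equality to ``$\Gamma(X)$ is $\Delta$-regular and $\Phi$ or $-\Phi$ is balanced.'' Where you genuinely diverge is in translating balance into Structures A and B. The paper fixes a normal spanning tree, recursively sorts vertices into the classes $V_{m\theta}$ according to the gains of the tree edges, and then uses neutrality of fundamental cycles to control the non-tree edges; its converse requires a separate computation that $\varphi(\overrightarrow{v_{x}Tv_{y}})=e^{i(h-g)\theta}$ for $v_x\in V_{g\theta}$, $v_y\in V_{h\theta}$, followed by a check that every fundamental cycle is neutral. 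You instead use that a balanced gain on a connected graph is switching-equivalent to the trivial gain (Theorem \ref{Theo.3} together with the diagonal-matrix formulation of switching), read the partition directly off the switching function $d_v$ --- which lies in the order-$(2k+2)$ cyclic group generated by $e^{i\theta}$ because it is a product of edge gains along a path from the root --- and in the converse exhibit the explicit switching function $\zeta(v)=e^{-im\theta}$ that trivializes (respectively negates) every edge gain. This is the same partition the paper builds, obtained globally rather than edge-by-edge along a tree; it buys a noticeably shorter converse (no path-gain lemma, no fundamental-cycle verification) at the cost of having to note that the normalization $d_{v_0}=1$ is harmless and that $-1$ and $-e^{\pm i\theta}$ are still integer powers of $e^{i\theta}$ because $\pi=(k+1)\theta$, both of which you address. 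One small point to make explicit in a full write-up: the independence of each $V_{m\theta}$ in Structure B follows because none of $\pi$, $\pi+\theta$, $\pi-\theta$ is congruent to $0$ modulo $2\pi$ when $k\geq 1$, so no edge of any type can stay inside a class.
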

\begin{figure}[!htb]
	\begin{center}
		\includegraphics[scale=1.01]{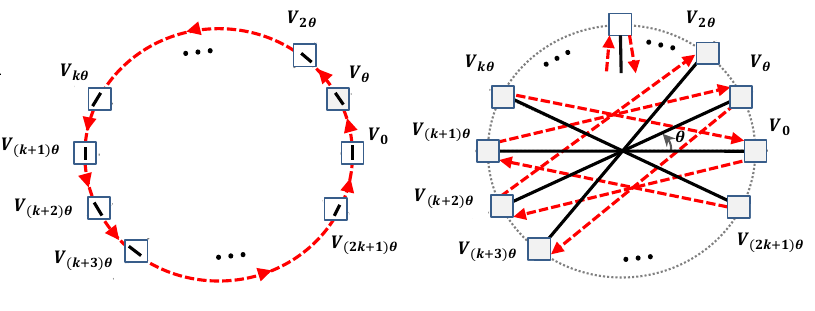}
		\caption{Structure A and Structure B for $ H_{k}(X) $ } \label{fig14}
	\end{center}
\end{figure}

In Figure \ref{fig14}, red dotted arrows and solid black lines denote directed and undirected edges, respectively.

\begin{proof}
	
	Let $ X $ be a mixed graph and $H_k(X)$ be the $k$-generalized Hermitian adjacency matrix of $X$ for some  $ k \in \mathbb{N}$. Let  $ G=\Gamma(X) $. Define $ \Phi=(G, \varphi)$ be a $ \mathbb{T} $-gain graph, where $ \varphi(\overrightarrow{e_{st}})$ is the $ (s,t) $-th entry of $ H_k(X) $. Therefore, by Theorem  \ref{Theo.12}, $\rho(H_{k}(X)) \leq \Delta $. Since $ X $ is weakly connected,  $ G $ is connected. Thus, by Theorem \ref{Theo.4.5}, equality holds if and only if $ G $ is $ \Delta$-regular and either $ \Phi $ is balanced or $ -\Phi $ is balanced. Let us show that $ \Phi $ is balanced if and only if $ X $ has the structure A, and $ -\Phi $ is balanced if and only if $ X $ has the structure B.  Let $ T $ be a normal spanning tree of $ G $ with the root vertex $ v_{1} $, and let  $ \theta=\frac{\pi}{k+1} $.
	
	\noindent{\bf Case (i):} Let  $\Phi$ be balanced. We partition the vertex set of $X$ as follows: 
	Let  $v_{1} \in V_{0}$. If a vertex $v_l$ is adjacent  to $v_1$ in $T$, then keep the vertex $v_l$ in one of the vertex partition as follows:
	\begin{enumerate}
		\item[(a)] $ v_{l}  \in  V_{\theta} $, if $ \varphi(\overrightarrow{e_{1l}}) = e^{i\theta} $,
		\item[(b)]   $ v_l \in V_{-\theta~(\mod ~ 2\pi)} ,$  if  $ \varphi(\overrightarrow{e_{1l}}) =  e^{-i\theta} $, and
		\item[(c)]  $ v_l \in V_{0} $ , if  $ \varphi(\overrightarrow{e_{1l}}) =  1. $
	\end{enumerate}
	
	In a similar way distribute the vertices of $V(G)$ into the sets $V_{0}, V_{\theta}, V_{2\theta}, \dots ,V_{(2k+1)\theta}$. Note that if $ v_s \in V_{p\theta} $ and $ v_t \in V_{q\theta} $, then $ \varphi(\overrightarrow{v_sTv_t})=e^{i(q-p)\theta} $, where $ 0\leq (q-p)\leq (2k+1) $. Let $ e_{st}\in E(G)\setminus E(T) $. Suppose $\overrightarrow{e_{st}}$ is a directed edge in $ X $ such that  $ v_s \in V_{p\theta} , v_t\in V_{q\theta} $, and $ \varphi(\overrightarrow{e_{st}})=e^{i\theta} $. Since  $ e_{st}\in E(G)\setminus E(T) $, there is a fundamental cycle $ C $ with respect to $ T $ such that $ e_{st}\in E(C) $.   Also $ \varphi(\overrightarrow{C})=\varphi(\overrightarrow{e_{st}})\varphi(\overrightarrow{v_tTv_s})=1$, as $ \Phi $ is balanced. Thus $\varphi(\overrightarrow{v_sTv_t})=e^{i\theta}$, and hence $q-p= 1$. A similar argument works for all the edges of $X$, hence $X$ has structure $A$.

	Conversely, let $ X $ have structure A. We will show that $ \Phi$ is balanced. In the proof, we  scale the labelling of vertex partitions into the region $ [0,2\pi)$.\\
	\textbf{Claim:} For any path $\overrightarrow{v_xPv_y}  $,  if $ v_x \in V_{g\theta} $ and $ v_y \in V_{h\theta} $, then $ \varphi(\overrightarrow{v_xPv_y})=e^{i(h-g)\theta} $.\\
	Notice that if $ v_s \in V_{g\theta} $ and $ v_s \sim v_t $, then $ v_t \in V_{(g+0)\theta} $ or $ v_t \in V_{(g+1)\theta} $ or $ v_t \in V_{(g-1)\theta} $ if $ \varphi(\overrightarrow{e_{st}})=1$ or $ e^{i\theta}$ or $e^{-i\theta}$, respectively. Suppose, $ \overrightarrow{v_xPv_y} $  has $ r $ undirected edges, $ p $ edges with gain $ e^{i\theta} $ and $ q $ edges with gain $ e^{-i\theta} $. Then $ \varphi(\overrightarrow{v_xPv_y})=e^{i(p-q)\theta} $. On the other hand, since $ v_x\in V_{g\theta} $, then $ v_y\in V_{(g+p-q)\theta} $. Therefore, $ g+p-q=h $ and hence $\varphi(\overrightarrow{v_xPv_y})=e^{i(h-g)\theta} $.
	
	Let $ C $ be a cycle and $ v_x, v_y $ be two vertices lie on $ C $. Then $\overrightarrow{C}$ can be written as $ (\overrightarrow{v_xP_1v_y})(\overrightarrow{v_yP_2v_x}) $, where  $ P_1 $ is the path between the vertices $v_x$ and $v_y$ in $\overrightarrow{C}$,  and $ P_2 $ is the path between the vertices $v_y$ and $v_x$ in $\overrightarrow{C}$. Then $\varphi(\overrightarrow{C})=\varphi(\overrightarrow{v_xP_1v_y})\varphi(\overrightarrow{v_yP_2v_x})$. Suppose $ v_x\in V_{g\theta} $ and $ v_y \in V_{h\theta} $. Then   $ \varphi(\overrightarrow{C})=\varphi(\overrightarrow{v_xP_1v_y})\varphi(\overrightarrow{v_yP_2v_x})=e^{i(h-g)\theta+i(g-h)\theta}=1$. Hence $ \Phi $ is balanced.\\
	
	\noindent {\bf Case (ii)}: Let $ -\Phi $ be balanced. We partition the vertex set of $X$ as follows: 
	Let  $v_{1} \in V_{0}$. If a vertex $v_l$ is adjacent  to $v_1$ in $T$, then keep the vertex $v_l$ in one of the vertex partition as follows:
	\begin{enumerate}
		\item[(a)] $ v_{l}  \in  V_{(\pi + \theta) (\mod ~ 2\pi)} $, if $ -\varphi(\overrightarrow{e_{1l}}) = -e^{i \theta} = e^{i(\pi + \theta)} $,
		\item[(b)]   $ v_l \in V_{(\pi -\theta)(\mod ~ 2\pi)} ,$  if  $ -\varphi(\overrightarrow{e_{1l}}) = - e^{-i\theta} = e^{i(\pi -\theta)} $, and
		\item[(c)]  $ v_l \in V_{(\pi +0)} $ , if  $ -\varphi(\overrightarrow{e_{1l}}) =  -1 =  e^{i\pi} . $
	\end{enumerate}
	Recursively distribute the vertices of $V(X)$ into the sets $V_{0}, V_{ \theta}, V_{2\theta}, \dots ,V_{(2k+1)\theta}$. Since $-\Phi$ is balanced, if $ e_{st}\in E(G)\setminus E(T) $, then, as in case(i), $- \varphi(\overrightarrow{v_{s}Tv_{t}})$ equals to   $-\varphi(\overrightarrow{e_{ts}})^{-1}$.  From the construction, it is clear that for every undirected edge $ e_{st} $ in $ X $, if $ v_{s} \in V_{m\theta}$ for some $ m $, then $ v_{t}\in V_{(\pi+m\theta)~( \mod~2\pi)}$. Similar to case(i), it follows that for every directed edge $ \overrightarrow{e_{st}} $ in $ X $, if $ v_{s}\in V_{m\theta} $ for some $ m $, then $ v_{t}\in V_{(\pi+(1+m)\theta)~( \mod~2\pi)} $. 
	
	The proof of the converse is similar to that of case (i).
\end{proof}

\begin{remark}
	By taking $ k=1 $ and $k =2 $ in  Theorem \ref{Theo5.5} we get the known results \cite[Theorem 5.1]{Bojan} and 	\cite[Theorem 4.1]{S_Li} , respectively. 
\end{remark}

\bigskip
\section*{Acknowledgments}
The authors thank Prof Thomas Zaslavsky, Binghamton University, for his comments and suggestions on an earlier version of the paper. Aniruddha Samanta thanks University Grants Commission(UGC)  for the financial support in the form of the Senior Research Fellowship (Ref.No:  19/06/2016(i)EU-V; Roll No. 423206). M. Rajesh Kannan would like to thank the SERB, Department of Science and Technology, India, for financial support through the projects MATRICS (MTR/2018/000986) and Early Career Research Award (ECR/2017/000643).
\bigskip
\section*{Data availability}
		Data sharing is not applicable to this article as no datasets were generated or analyzed during the current study.

\bibliographystyle{amsplain}
\bibliography{References}
\end{document}